\newtheorem{theorem}{Theorem}[section]
\newtheorem{lemma}[theorem]{Lemma}
\newtheorem{proposition}[theorem]{Proposition}
\newtheorem{corollary}[theorem]{Corollary}
\theoremstyle{definition}
\newtheorem{definition}[theorem]{Definition}
\newtheorem{example}[theorem]{Example}
\theoremstyle{remark}
\newtheorem{remark}[theorem]{Remark}
\numberwithin{equation}{section}
\begin{document}

\title[FRACTIONAL IVPS]{Fractional integral equations\\ tell us how to impose initial values\\ in fractional differential equations}


\author[D. Cao Labora]{Daniel Cao Labora}
\address{Dept. of Statistics, Mathematical Analysis and Optimization,  University of Santiago de Compostela, Facultade de Matem\'aticas, Campus Vida, 
R\'ua Lope G\'omez de Marzoa s/n -- 15782 Santiago de Compostela, SPAIN }
\curraddr{}
\email{daniel.cao@usc.es}
\thanks{}


\subjclass[2010]{26A33, 34A08}

\keywords{Fractional differential equations, initial values, existence, uniqueness}

\date{October 6, 2019}


\begin{abstract}
The goal of this work is to discuss how should we impose initial values in fractional problems to ensure that they have exactly one smooth unique solution, where smooth simply means that the solution lies in a certain suitable space of fractional differentiability. For the sake of simplicity and to show the fundamental ideas behind our arguments, we will do this only for the Riemann-Liouville case of linear equations with constant coefficients.

In a few words, we study the natural consequences in fractional differential equations of the already existing results involving existence and uniqueness for their integral analogues, in terms of the Riemann-Liouville fractional integral. Under this scope, we derive naturally several interesting results. One of the most astonishing ones is that a fractional differential equation of order $\beta>0$ with Riemann-Liouville derivatives can demand, in principle, less initial values than $\lceil \beta \rceil$ to have a uniquely determined solution. In fact, if not all the involved derivatives have the same decimal part, the amount of conditions is given by $\lceil \beta - \beta_* \rceil$ where $\beta_*$ is the highest order in the differential equation such that $\beta-\beta_*$ is not an integer.
\end{abstract}

\maketitle

\section{Introduction}

One of the most typical trademarks involving Fractional Calculus is the wide range of opinions about the notions of what is a natural fractional version of some integer order concept and what is not. On the one hand, this plurality leads to very interesting debates and fosters a very relevant critical thinking about whether things are going ``in the right direction'' or not. On the other hand, it is difficult to handle such an amount of different notions and ideas in the extant literature, since there are usually lots of generalized fractional versions of a single integer order concept, some of them not very accurate. These debates are still very alive nowadays, and we are in a concrete moment where even the most fundamental aspects of fractional calculus are being reviewed, \cite{Lu}.

In this frame, the task of this paper is to point out some relevant facts concerning the imposition of initial values for Riemann-Liouville fractional differential equations, which is the most classical extension for the usual derivative, in the particular case of linear equations with constant coefficients. However, it seems natural that the ideas described here could be extended to much more general cases. 

In our opinion, we have to begin from the little things we are sure about. In this sense, if restrict the study of Fractional Calculus to functions defined on finite length intervals $[a,b]$, it is a big consensus that Riemann-Liouville fractional integral with base point $a$ is the unique reasonable extension for the integral operator $\int_a^t$. The previous asseveration is not a simple opinion, since Riemann-Liouville fractional integral can be characterized axiomatically in very reasonable terms. 

\begin{theorem}[Cartwright-McMullen, \cite{CaMc}]\label{TCaMc}
Given a fixed $a \in \mathbb{R}$, there is only one family of operators $\left(I_{a^+}^{\alpha}\right)_{\alpha > 0}$ on $L^1[a,b]$ satisfying the following conditions:
\begin{enumerate}
\item The operator of order $1$ is the usual integral with base point $a$. (Interpolation property)
\item The Index Law holds. That is, $I_{a^+}^{\alpha} \circ I_{a^+}^{\beta}=I_{a^+}^{\alpha+\beta}$ for all $\alpha,\beta > 0$. (Index Law)
\item The family is continuous with respect to the parameter. That is, the following map $\textnormal{Ind}_a:\mathbb{R}^+ \longrightarrow \textnormal{End}_B \left(L^1[a,b] \right)$ given by $\textnormal{Ind}_a(\alpha) = I_{a^+}^{\alpha}$ is continuous, where $\textnormal{End}_B \left(L^1[a,b] \right)$ denotes the Banach space of bounded linear endomorphisms on $L^1[0,b]$. (Continuity)
\end{enumerate}
This family is precisely given by the Riemann-Liouville fractional integrals, whose expression will be recalled during this paper.
\end{theorem}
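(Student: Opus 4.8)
The plan splits the statement into an easy existence half and a substantive uniqueness half, the latter reducing to a scalar functional equation.

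\emph{Existence.} First I would check that the Riemann--Liouville integrals $I_{a^+}^\alpha f(t)=\frac{1}{\Gamma(\alpha)}\int_a^t (t-s)^{\alpha-1}f(s)\,ds$ satisfy (1)--(3). After translating to $[0,b-a]$ this operator is convolution against the locally integrable kernel $\Phi_\alpha(t)=t^{\alpha-1}/\Gamma(\alpha)$. Property (1) is immediate since $\Phi_1\equiv 1$; property (2) is the classical computation $\Phi_\alpha\ast\Phi_\beta=\Phi_{\alpha+\beta}$, which reduces to Euler's Beta integral $\int_0^1 u^{\alpha-1}(1-u)^{\beta-1}\,du=\Gamma(\alpha)\Gamma(\beta)/\Gamma(\alpha+\beta)$; and property (3) follows from Young's inequality $\|\Phi_\alpha\ast f\|_1\le\|\Phi_\alpha\|_{L^1[0,b-a]}\,\|f\|_1$ together with the continuity of $\alpha\mapsto\Phi_\alpha$ in $L^1[0,b-a]$ (dominated convergence), which upgrades to norm-continuity of $\alpha\mapsto I_{a^+}^\alpha$.

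\emph{Uniqueness, reduction to a scalar problem.} Let $(J^\alpha)_{\alpha>0}$ be any family satisfying (1)--(3); after translation I assume $a=0$ and write $V:=J^1$ for ordinary integration on $L^1[0,b]$. Properties (1) and (2) force $J^n=V^n=I_{0^+}^n$ for every $n\in\mathbb{N}$, and (2) forces $J^\alpha\circ V=V\circ J^\alpha$ for every $\alpha$. Now $\mathbf{1}$ is a cyclic vector for $V$: the linear span of $\{V^k\mathbf{1}:k\ge 0\}$ is the space of polynomials, dense in $L^1[0,b]$. Since each $J^\alpha$ is bounded and commutes with $V$, the identity $J^\alpha(V^k\mathbf{1})=V^k(J^\alpha\mathbf{1})$ shows that $J^\alpha$ is completely determined by the single function $w_\alpha:=J^\alpha\mathbf{1}\in L^1[0,b]$; the same is true of $I_{0^+}^\alpha$, whose associated function is $t^\alpha/\Gamma(\alpha+1)$. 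Hence everything reduces to proving $w_\alpha(t)=t^\alpha/\Gamma(\alpha+1)$ for all $\alpha>0$, after which $J^\alpha=I_{0^+}^\alpha$ because both are bounded, commute with $V$, and agree on the cyclic vector.

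\emph{A functional equation for $w_\alpha$.} Writing a polynomial as $P=\sum_{k\ge0}P^{(k)}(0)\,V^k\mathbf{1}$ and using $J^\alpha V^k=V^k J^\alpha$ gives $J^\alpha P=P(0)\,w_\alpha+w_\alpha\ast P'$ for every polynomial $P$. Applying $V$ and using $Vg=\mathbf{1}\ast g$ and $\mathbf{1}\ast P'=P-P(0)\mathbf{1}$ on $[0,b]$, the $P(0)$-terms cancel and $J^{\alpha+1}P=w_\alpha\ast P$; by density of polynomials and boundedness of $J^{\alpha+1}$ this extends to $J^{\alpha+1}g=w_\alpha\ast g$ for all $g\in L^1[0,b]$. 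Applying this to $w_\beta=J^\beta\mathbf{1}$ and invoking the semigroup law yields
\[
w_\alpha\ast w_\beta=J^{\alpha+1}J^{\beta}\mathbf{1}=J^{\alpha+\beta+1}\mathbf{1}=w_{\alpha+\beta+1}\qquad(\alpha,\beta>0),
\]
a clean identity to be read together with $w_1(t)=t$, $w_n(t)=t^n/n!$, and the continuity of $\alpha\mapsto w_\alpha$ into $L^1[0,b]$.

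\emph{Identifying $w_\alpha$ --- the main obstacle.} Iterating the functional equation with $q$ equal factors $1/q$ gives the pure convolution equation $w_{1/q}^{\ast q}=w_q=t^q/q!$ in $L^1[0,b]$, of which $t^{1/q}/\Gamma(1+1/q)$ is a solution. Writing the difference of two solutions as $(g-v)\ast\bigl(g^{\ast(q-1)}+g^{\ast(q-2)}\ast v+\dots+v^{\ast(q-1)}\bigr)=0$ and applying Titchmarsh's convolution theorem, any discrepancy would have to be supported away from $0$; since all relevant convolution powers are strictly positive near $0$, this is impossible except for the real sign ambiguity $g=\pm v$ that occurs for even $q$. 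That sign is removed by the continuity hypothesis (3) --- the branch $w_{1/2}=-t^{1/2}/\Gamma(3/2)$ cannot be completed to a family continuous through the integers. Once $w_{1/q}=t^{1/q}/\Gamma(1+1/q)$ for all $q$, one gets $J^{p/q}=(J^{1/q})^{p}=I_{0^+}^{p/q}$, hence $w_{p/q}=t^{p/q}/\Gamma(p/q+1)$ for all rationals, and continuity in $\alpha$ extends this to all $\alpha>0$, completing the proof. The steps I expect to cost the most effort are the regularity/positivity input needed to run the Titchmarsh argument and the continuity argument fixing the branch; an alternative finish is to identify the commutant of $V$ with a Duhamel convolution algebra and argue there, but the rigidity of convolution roots seems the most self-contained route.
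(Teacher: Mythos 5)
The paper itself does not prove this theorem: it is imported verbatim from Cartwright--McMullen \cite{CaMc} as motivation for privileging the Riemann--Liouville integral, so there is no in-paper proof to compare against and your argument has to stand on its own. Most of it does. The existence half (Beta integral for the index law, Young's inequality plus dominated convergence for norm-continuity) is fine, and the uniqueness reduction is correct and well organised: any admissible family commutes with $V=J^1$, the constant $\mathbf{1}$ is cyclic for $V$ because polynomials are dense in $L^1[0,b]$, so each bounded $J^\alpha$ is pinned down by $w_\alpha=J^\alpha\mathbf{1}$; the identities $J^{\alpha+1}g=w_\alpha* g$, $w_\alpha* w_\beta=w_{\alpha+\beta+1}$ and $w_{1/q}^{*q}=t^q/q!$ all check out.

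The genuine gap is in the extraction of the convolution root. You factor $g^{*q}-v^{*q}=(g-v)*S$ with $S=\sum_{j}g^{*j}*v^{*(q-1-j)}$ and run Titchmarsh on the claim that ``all relevant convolution powers are strictly positive near $0$.'' But $g=w_{1/q}$ is at this stage an arbitrary element of $L^1[0,b]$: nothing proved so far gives it positivity, continuity, or even a definite sign near $0$, and a sum of terms each nonvanishing near $0$ can still vanish identically by cancellation (exactly what happens for $g=-v$, $q=2$, where $S\equiv 0$). So Titchmarsh applied to $(g-v)*S=0$ yields nothing until you control the vanishing interval of $S$, which positivity cannot supply. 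The fix stays inside your framework: complexify, note that Titchmarsh makes $L^1([0,b],\mathbb{C})$ with truncated convolution an integral domain, and use the full factorization $X^q-Y^q=\prod_{j=0}^{q-1}(X-\zeta^j Y)$ into linear factors; one factor must vanish, so $g=\zeta^j v$ for some $q$-th root of unity, and reality of $g$ and $v$ forces $\zeta^j=\pm1$. Your removal of the residual sign by ``continuity through the integers'' is also left vague (the sign function on the rationals is not obviously locally constant a priori); a cleaner finish is $J^{1/q}=\bigl(J^{1/(2q)}\bigr)^2=\bigl(\pm I_{0^+}^{1/(2q)}\bigr)^2=I_{0^+}^{1/q}$, which kills the minus sign outright, leaving hypothesis (3) to do only the job it must do, namely passing from rational to arbitrary real orders.
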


Hence, it makes sense to study in detail fractional integral problems for the Riemann-Liouville fractional integral to derive consequences for the corresponding fractional equations afterwards. Finally, to draw the attention of curious readers, we mention again that one of the most interesting results that we have found out is that a fractional differential equation of order $\alpha>0$ with Riemann-Liouville derivatives can demand, in principle, less initial values than $\lceil \alpha \rceil$ to have a uniquely determined solution. A complete range of highlighted results with their implications can be consulted in Section \ref{s:conc}, while the previous sections are devoted to the corresponding deductions.

\subsection{Goal of the work}

The goal of this work is to study how should we impose initial values in fractional problems with Riemann-Liouville derivative to ensure that they have a smooth and unique solution, where smooth simply means that the solution lies in a certain suitable space of fractional differentiability. To achieve this, we will depart from the results involving the Riemann-Liouville fractional integral, since it arises as the natural generalization of the usual integral operator, recall Theorem \ref{TCaMc}.

First, we will recall some results that imply that fractional integral problems have always a unique solution. We also recall the fundamental notions concerning Fractional Calculus, and we pay special attention to the functional spaces where the performed calculations, and specially fractional derivatives, are well defined. Note that this point of ``where are functions defined'' is crucial to talk about existence or uniqueness of solution and is often neglected in the literature. Indeed, to avoid this problem, many research has been conducted for Caputo derivatives instead of Riemann-Liouville, see for instance \cite{DiFo, Maina} or general comments in \cite{MMKA}. The ideas of this paragraph are developed in the second section, and most of them are available in the extant literature, except (to the best of our knowledge) Lemma \ref{estructura}.

Second, we see how each fractional differential equation of order $\alpha$ is linked with a family of fractional integral problems, whose source term lives in a $\lceil \alpha \rceil$ dimensional affine subspace of $L^1[0,b]$. This means that each solution to the fractional differential equation is a solution to one (and only one!) fractional integral problem of the $\lceil \alpha \rceil$ dimensional family. Conversely, any solution to a fractional integral problem of the family is a solution to the fractional differential equation, provided that the solution is smooth enough. In general, the set of source terms of the family of fractional integral problems that provide a smooth solution will consist in an affine subspace of $L^1[0,b]$ of a dimension lower than $\lceil \alpha \rceil$. This is done in the third section.

Third, we characterize when a source term of the $\alpha$ dimensional family induces a smooth solution, and thus a solution for the associated fractional differential equation. This characterization induces a natural correspondence between each source term inducing a smooth solution for the integral problem and the vector of initial values fulfilled by the solution. This correspondence is performed in a way that ensures that the fractional differential problem has existence and uniqueness of solution. This final part is discussed in the fourth section.

Finally, we establish a section of conclusions to highlight the most relevant obtained results, and to point out to some relevant work that should be performed in the future to continue with this approach.

\section{Basic notions}

In this section we will introduce the basics notions of Fractional Calculus that we are going to use, together with their more relevant properties and some results of convolution theory that can be not so well known. We assume that the reader is familiar with the basic theory of Banach spaces, Special Functions and Integration Theory, specially the fundamental facts involving the space of integrable functions over a finite length interval, denoted by $L^1[a,b]$, and the main properties of the $\Gamma$ function.

\subsection{The Riemann-Liouville fractional integral}

We will briefly introduce the Riemann-Liouville fractional integral, together with its most relevant properties. We will make this introduction from the perspective of convolutions, since it will be relevant to notice that the Riemann-Liouville fractional integral is no more than a convolution operator, to apply later some adequate results of convolution theory.

\begin{definition} \label{Convolution} Given $f \in L^1[a,b]$, we defined its associated convolution operator as $C_a(f):L^1[a,b] \longrightarrow L^1[a,b]$ defined as $$(f *_a g)(t):=(C_a(f)\,g)(t):= \int_{a}^t f(t-s+a) \cdot g(s) \, ds$$ for $g \in L^1[a,b]$ and $t \in [a,b]$. Under the previous notation, we say that $f$ is the kernel of the convolution operator $C_a(f)$.
\end{definition}

\begin{definition} \label{DRiLiIn}
We define the left Riemann-Liouville fractional integral of order $\alpha>0$ of a function $f \in L^1[a,b]$ with base point $a$ as $$I_{a^+}^{\alpha}g(t)=\int_a^t \frac{(t-s)^{\alpha-1}}{\Gamma(\alpha)} \cdot g(s) \, ds,$$ for almost every $t \in [a,b]$. In the case that $\alpha=0$, we just define $$I_{a^+}^{0}\, g(t)=\textnormal{Id} \, g(t)=g(t).$$
\end{definition}

From now on we will assume that $a=0$, since the results for a generic value of $a$ can be achieved from the ones that we will mention or develop for $a=0$ after a suitable translation. Moreover, when using the expression ``Riemann-Liouville fractional integral'' we will understand that it is the left Riemann-Liouville fractional integral with base point $a=0$.

\begin{remark}
We observe that, for $\alpha>0$, the Riemann-Liouville fractional integral operator $I_{0^+}^{\alpha}$ can be written as a convolution operator $C_0(f)$, with kernel \[f(t)=\frac{t^{\alpha-1}}{\Gamma(\alpha)}.\]
\end{remark}

It is well known that the Riemann-Liouville fractional integral fulfils the following properties, see \cite{Samko}.

\begin{proposition}\label{propo} For every $\alpha,\beta \geq 0$:
\begin{itemize}
\item $I_{0^+}^{\alpha}$ is well defined, meaning that $I_{0^+}^{\alpha}L^1[0,b] \subset L^1[0,b]$.
\item $I_{0^+}^{\alpha}$ is a continuous operator (equivalently, a bounded operator) from the Banach space $L^1[0,b]$ to itself.
\item $I_{0^+}^{\alpha}$ is an injective operator.
\item $I_{0^+}^{\alpha}$ preserves continuity, meaning that $I_{0^+}^{\alpha}\mathcal{C}[0,b] \subset \mathcal{C}[0,b]$.
\item We have the Index Law $I_{0^+}^{\beta} \circ I_{0^+}^{\alpha} = I_{0^+}^{\alpha + \beta}$ for $\alpha,\beta \geq 0$. In particular, $I_{0^+}^{\alpha+\beta}L^1[0,b] \subset I_{0^+}^{\beta} L^1[0,b]$.
\item Given $f \in L^1[0,b]$ and $\alpha \geq 1$, we have that $I_{0^+}^{\alpha}f$ is absolutely continuous and, moreover, $I_{0^+}^{\alpha}f(0)=0$. 
\end{itemize}
\end{proposition}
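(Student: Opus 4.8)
The plan is to treat every bullet as a consequence of the single observation already recorded in the preceding remark: for $\alpha>0$ the operator $I_{0^+}^{\alpha}$ is the convolution operator $C_0(k_\alpha)$ with kernel $k_\alpha(t)=t^{\alpha-1}/\Gamma(\alpha)$, and $k_\alpha\in L^1[0,b]$ precisely because $\alpha-1>-1$. First I would prove a Young-type inequality for the finite-interval convolution, namely $\|f*_0 g\|_{L^1[0,b]}\le \|f\|_{L^1[0,b]}\,\|g\|_{L^1[0,b]}$ for $f,g\in L^1[0,b]$; this is a direct application of Tonelli's theorem to the nonnegative function $(t,s)\mapsto |f(t-s)|\,|g(s)|$ on the triangle $\{0\le s\le t\le b\}$ (equivalently, extend by zero and compare with the convolution on $\mathbb{R}$). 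Taking $f=k_\alpha$ yields at once the first two bullets: $I_{0^+}^{\alpha}$ maps $L^1[0,b]$ into itself and is a bounded, hence continuous, linear operator, with $\|I_{0^+}^{\alpha}\|\le\|k_\alpha\|_{L^1[0,b]}=b^{\alpha}/\Gamma(\alpha+1)$ (the case $\alpha=0$ being trivial). This inequality also guarantees absolute convergence of every double integral appearing below, so it legitimises each later use of Fubini's theorem.

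Next I would establish the Index Law. For $\alpha,\beta>0$ and $g\in L^1[0,b]$, write $I_{0^+}^{\beta}I_{0^+}^{\alpha}g(t)$ as an iterated integral, apply Fubini to exchange the order of integration over $\{0\le s\le\tau\le t\}$, and reduce the inner integral $\int_s^t (t-\tau)^{\beta-1}(\tau-s)^{\alpha-1}\,d\tau$ to a Beta integral via the substitution $\tau=s+(t-s)v$. The identity $B(\alpha,\beta)=\Gamma(\alpha)\Gamma(\beta)/\Gamma(\alpha+\beta)$ then converts the resulting kernel into $k_{\alpha+\beta}$, and the cases where $\alpha$ or $\beta$ vanishes are immediate from the definition. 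The stated inclusion $I_{0^+}^{\alpha+\beta}L^1[0,b]\subset I_{0^+}^{\beta}L^1[0,b]$ follows by factoring $I_{0^+}^{\alpha+\beta}=I_{0^+}^{\beta}\circ I_{0^+}^{\alpha}$.

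With the Index Law available, injectivity is the only bullet with genuine content, and I expect it to be the main obstacle. The clean route avoids the Titchmarsh convolution theorem and instead reduces to ordinary integration: if $I_{0^+}^{\alpha}f=0$ and $n:=\lceil\alpha\rceil$, then $I_{0^+}^{n}f=I_{0^+}^{n-\alpha}\,I_{0^+}^{\alpha}f=0$ since $n-\alpha\ge 0$; but $I_{0^+}^{n}f$ is the $n$-fold iterated indefinite integral of $f$, and the indefinite integral of an $L^1[0,b]$ function vanishes identically only when that function is zero almost everywhere (Lebesgue differentiation theorem), so an easy induction on $n$ forces $f=0$ a.e.

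Finally, preservation of continuity follows from the substitution $s=tu$, which gives $I_{0^+}^{\alpha}g(t)=\dfrac{t^{\alpha}}{\Gamma(\alpha)}\int_0^1(1-u)^{\alpha-1}g(tu)\,du$; for $g\in\mathcal{C}[0,b]$ the integrand is continuous in $t$ and dominated by $\|g\|_\infty\,(1-u)^{\alpha-1}\in L^1[0,1]$, so dominated convergence yields continuity of $t\mapsto I_{0^+}^{\alpha}g(t)$ on $[0,b]$ (for $t=0$ one checks the limit is $0$). For the last bullet, when $\alpha\ge 1$ the Index Law gives $I_{0^+}^{\alpha}f=I_{0^+}^{1}\!\big(I_{0^+}^{\alpha-1}f\big)$ with $I_{0^+}^{\alpha-1}f\in L^1[0,b]$, and the indefinite integral of an $L^1[0,b]$ function is absolutely continuous and vanishes at the base point $0$, which gives both asserted properties at once.
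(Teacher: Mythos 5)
Your proposal is correct. Note, though, that the paper does not prove this proposition at all: it is stated as a collection of well-known facts with a citation to Samko--Kilbas--Marichev, so there is no ``paper's proof'' to match against; what you have written is a self-contained substitute for that citation. All the individual steps check out: the Young/Tonelli bound $\Vert k_\alpha *_0 g\Vert_{L^1}\le \Vert k_\alpha\Vert_{L^1}\Vert g\Vert_{L^1}$ with $\Vert k_\alpha\Vert_{L^1[0,b]}=b^\alpha/\Gamma(\alpha+1)$ gives well-definedness and boundedness; the Fubini--Beta computation for the Index Law is the standard one; the change of variables $s=tu$ plus dominated convergence handles preservation of continuity (including the endpoint $t=0$, where the factor $t^\alpha$ forces the limit to be $0$); and the last bullet is an immediate factorization through $I_{0^+}^{1}$. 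The one place where you make a genuine choice is injectivity: you reduce $I_{0^+}^{\alpha}f=0$ to $I_{0^+}^{\lceil\alpha\rceil}f=0$ via the Index Law and then peel off one ordinary integration at a time using the Lebesgue differentiation theorem (using that each $I_{0^+}^{k}f$ with $k\ge 1$ is continuous, so ``zero a.e.'' upgrades to ``zero everywhere'' before differentiating). This is cleaner and more elementary than invoking the Titchmarsh convolution theorem, which the paper does use later for the injectivity of $\Upsilon+\textnormal{Id}$ but does not need here; your route buys a fully elementary proof at the cost of not generalizing to kernels without the semigroup property. The only cosmetic caveat is the degenerate case $\alpha=0$ (where $I_{0^+}^{0}=\textnormal{Id}$), which you flag as trivial for boundedness but should also be noted for the continuity-preservation and Index Law bullets; nothing breaks there.
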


Moreover, we will also use several times the following well known and straightforward remark, that can be obtained after direct computation and find in the basic bibliography involving Fractional Calculus \cite{KiSrTr, MiRo, Pod, Samko}.

\begin{remark}\label{Example}
We have that, for $\beta>-1$ and $\alpha \geq 0$, \[I_{0^+}^{\alpha}t^{\beta}=\frac{\Gamma(\beta+1)}{\Gamma(\alpha+\beta+1)}t^{\alpha+\beta} \in I_{0^+}^{\gamma}L^1[0,b].\] Indeed, $I_{0^+}^{\alpha}t^{\beta} \in I_{0^+}^{\gamma}L^1[0,b]$ if and only if $\alpha+\beta>\gamma-1$.
\end{remark}

\subsection{The Riemann-Liouville fractional derivative}

In this subsection, we will indicate the most relevant points when constructing the Riemann-Liouville fractional derivative. We will begin with a short introduction to absolutely continuous functions of order $n$, since the spaces where Riemann-Liouville differentiability is well defined can be understood as their natural generalization for the fractional case, see \cite{Samko}.

\subsubsection{Absolutely continuous functions and the Fundamental Theorem of Calculus}

We shall briefly indicate how the set of absolutely continuous functions is made up of the functions that, essentially, are antiderivatives of some function in $L^1[0,b]$. We will see later, how this notion is highly relevant to construct the spaces where fractional derivatives are well-defined.

\begin{definition} A real function $f$ of real variable is absolutely continuous on $[a,b]$ if for any $\varepsilon > 0$ there is $\delta >0$ such that for every family of subintervals $\{[a_1,b_1],...,[a_n,b_n]\}$ with disjoint interiors we have $$\sum_{k=1}^n (b_k-a_k) < \delta \Longrightarrow \sum_{k=1}^n \vert f(b_k)-f(a_k)\vert < \varepsilon.$$ We denote the set of this functions by $AC[a,b]$.
\end{definition}

\begin{remark} It follows trivially from the definition that any absolutely continuous function is uniformly continuous and, hence, continuous.
\end{remark}

This definition appears to carry little information. Furthermore, is seems a bit complicated to check the absolute continuity of a given function if its expression is not very manageable. However, the following theorem characterizes the absolutely continuous functions in a simple way.

\begin{theorem}[Fundamental Theorem of Calculus] \label{absolute} Consider a real function $f$ defined on an interval $[0,b] \subset \mathbb{R}$. Then, $f \in AC[0,b]$ if and only if there exists $\varphi \in L^1[0,b]$ such that
\begin{equation} \label{abscon}
f(t)=f(a)+\int_{0}^t \varphi(s) \, ds.
\end{equation}
\end{theorem}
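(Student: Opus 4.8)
The plan is to prove the two implications of the Fundamental Theorem of Calculus separately, following the classical route through the theory of Lebesgue integration.

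\textbf{The easy direction.} Suppose first that $f$ has the integral representation \eqref{abscon} for some $\varphi \in L^1[0,b]$. Then for any finite family of subintervals $\{[a_1,b_1],\dots,[a_n,b_n]\}$ with disjoint interiors one has
\[
\sum_{k=1}^n \lvert f(b_k)-f(a_k)\rvert = \sum_{k=1}^n \left\lvert \int_{a_k}^{b_k} \varphi(s)\,ds \right\rvert \leq \int_{\bigcup_k [a_k,b_k]} \lvert \varphi(s)\rvert\,ds .
\]
The absolute continuity of $f$ then follows from the absolute continuity of the Lebesgue integral of the fixed $L^1$ function $\lvert \varphi\rvert$: given $\varepsilon>0$ there is $\delta>0$ so that any measurable set of measure less than $\delta$ has $\int_E \lvert\varphi\rvert < \varepsilon$, and we take this same $\delta$ in the definition of $AC[0,b]$, using that $\sum_k(b_k-a_k)<\delta$ forces $\bigcup_k[a_k,b_k]$ to have measure less than $\delta$. (One also checks the trivial consistency $f(0)=f(a)$ when $a=0$, as in the statement.)

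\textbf{The hard direction.} Now suppose $f \in AC[0,b]$. This is the substantial part, and it is precisely the classical theorem of Lebesgue on the differentiation of absolutely continuous functions; I would either cite it from a standard reference on real analysis (Royden, Rudin, or Folland) or sketch it as follows. First, an absolutely continuous function is of bounded variation on $[0,b]$ (split $[0,b]$ into finitely many pieces each of length $<\delta$, on each of which the variation is at most $\varepsilon$). By Lebesgue's differentiation theorem for monotone functions, $f$ — being a difference of two monotone functions — is differentiable almost everywhere, and $f' \in L^1[0,b]$ with $\int_0^b \lvert f'\rvert \leq V_0^b(f)$. It then remains to show that $f(t) = f(0) + \int_0^t f'(s)\,ds$. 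One sets $g(t) := f(t) - f(0) - \int_0^t f'(s)\,ds$; by the easy direction already proved, $g \in AC[0,b]$, and $g' = 0$ almost everywhere. The crux is the lemma that an absolutely continuous function with a.e.-vanishing derivative is constant; this is proved using a Vitali covering argument to control $\lvert g(b)-g(0)\rvert$ by an arbitrarily small quantity, exploiting that on a set of full measure the incremental ratios of $g$ are arbitrarily small while the absolute continuity handles the complementary null set. Since $g(0)=0$, we get $g\equiv 0$, which is \eqref{abscon} with $\varphi = f'$.

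\textbf{Main obstacle.} The genuine difficulty is entirely in the forward implication, and specifically in the two ingredients it rests on: Lebesgue's almost-everywhere differentiability theorem for monotone (equivalently, bounded-variation) functions, and the vanishing-derivative-implies-constant lemma, both of which require a Vitali-type covering argument. Since these are standard results in measure theory and the excerpt explicitly assumes familiarity with Integration Theory, the cleanest presentation is to invoke them as known and spend the written proof only on assembling them — i.e., reducing to the vanishing-derivative case via the already-established easy direction — rather than reproving the covering lemmas. The easy direction, by contrast, is a short and self-contained consequence of absolute continuity of the Lebesgue integral.
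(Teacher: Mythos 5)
Your sketch is the standard and correct classical argument (absolute continuity of the Lebesgue integral for the easy direction; bounded variation, Lebesgue's a.e.\ differentiation theorem, and the Vitali-covering lemma that an absolutely continuous function with a.e.\ vanishing derivative is constant for the hard direction). The paper itself offers no proof of this theorem --- it is stated as a known result from real analysis, consistent with the author's declared assumption that the reader is familiar with Integration Theory --- so there is nothing to compare against; your proposal is a faithful outline of the textbook proof the paper implicitly relies on.
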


\begin{remark} This result establishes that, essentially, absolutely continuous functions defined on $[0,b]$ are the primitives of the functions of $L^1[0,b]$, that is, antiderivatives of measurable functions whose absolute value has finite integral.
\end{remark}

This last result allows us to define the derivative of an absolutely continuous function on $[0,b]$ as a certain function in $L^1[0,b]$.

\begin{definition} \label{weak} If $f \in AC([0,b])$, we define its derivative $D^1 f$ as the unique function $\varphi \in L^{1}[0,b]$ that makes (\ref{abscon}) hold.
\end{definition}

\begin{remark} It is relevant to have in mind that the previous definition makes sense because, once fixed $f(0)$, the antiderivative operator $I_{0^+}^1$ is injective when defined on $L^1[0,b]$, recall Proposition \ref{propo}. In particular, \[AC[0,b]= \left\langle\{ 1 \}\right \rangle \oplus I_{0^+}^1 L^1[0,b],\] where ``$1$'' denotes the constant function with value $1$.
\end{remark}

\begin{definition} For any $n \in \mathbb{Z}^+$, we say that $f \in AC^{n}[0,b]$ provided that $f \in \mathcal{C}^{n-1}[0,b]$ and $D^{n-1}f \in AC[0,b]$.
\end{definition}

Thus, $AC^n[0,b]$ consists of functions that can be differentiated $n$ times, but the last derivative might be computable only in the weak sense of Definition \ref{weak}. Analogously to the previous remark, we have the following result, see page 3 of \cite{Samko}.

\begin{remark} \label{direct} We have that \[AC^{n}[0,b]= \left\langle\{ 1, t, \dots, t^{n-1}\}\right \rangle \oplus I_{0^+}^n L^1[0,b],\] after applying $n$ times the Fundamental Theorem of Calculus \ref{absolute}. Moreover, the sum is direct since the property $f \in I_{0^+}^n L^1[0,b]$ implies that $f(0)=f'(0)=\cdots=f^{n-1)}(0)=0$ and the only polynomial of degree at most $n-1$ satisfying such conditions is the zero one.
\end{remark}

The key observation is that the vector space of functions that can be differentiated $n$ times, in the sense of Fundamental Theorem of Calculus \ref{absolute}, has two disjoint parts that only share the zero function. The left one $\left\langle\{ 1, t, \dots, t^{n-1}\}\right \rangle$, which are polynomials of degree strictly lower than $n$, consists in the functions that are annihilated by the operator $D^n$ and, thus, \[\ker D^n=\left\langle\{ 1, t, \dots, t^{n-1}\}\right \rangle.\] The right part $I_{0^+}^n L^1[0,b]$ consists of functions that are obtained after integrating $n$ times an element of $L^1[0,b]$, and hence it contains functions of trivial initial values until the derivative of order $n-1$.

We have the following well known relation between the spaces $AC^{n}[0,b]$, with respect to the inclusion.

\begin{proposition} If $n > m> 0$, we have that \[AC^{n}[0,b] \subset AC^{m}[0,b].\]
\end{proposition}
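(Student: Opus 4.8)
The plan is to reduce everything to the direct-sum description of $AC^n[0,b]$ recorded in Remark \ref{direct}, namely $AC^{n}[0,b]= \langle\{ 1, t, \dots, t^{n-1}\}\rangle \oplus I_{0^+}^n L^1[0,b]$, and then to verify that each of the two summands on the right-hand side is contained in $AC^m[0,b]$. Since $AC^m[0,b]$ is a vector space, the containment of both summands immediately forces $AC^n[0,b]\subset AC^m[0,b]$, which is exactly the assertion.

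For the polynomial part, every element of $\langle\{ 1, t, \dots, t^{n-1}\}\rangle$ is a polynomial, hence of class $\mathcal{C}^\infty[0,b]$; and a polynomial is differentiable $m$ times in the classical sense, with $(m-1)$-th derivative again a polynomial, which is Lipschitz on the compact interval $[0,b]$ and therefore absolutely continuous, so $\langle\{ 1, t, \dots, t^{n-1}\}\rangle\subset AC^m[0,b]$. For the ``integral'' part, I would split $n=m+(n-m)$ with $n-m>0$ and apply the Index Law of Proposition \ref{propo}: $I_{0^+}^n = I_{0^+}^m\circ I_{0^+}^{n-m}$, so that $I_{0^+}^n L^1[0,b]=I_{0^+}^m\bigl(I_{0^+}^{n-m}L^1[0,b]\bigr)\subset I_{0^+}^m L^1[0,b]$, using that $I_{0^+}^{n-m}$ maps $L^1[0,b]$ into itself (again Proposition \ref{propo}). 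Finally $I_{0^+}^m L^1[0,b]\subset AC^m[0,b]$ is read off directly from Remark \ref{direct}, and putting the two inclusions together finishes the argument.

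I do not expect any genuine obstacle here; the only point that requires a moment's attention is checking that the small values of $m$ are covered by Remark \ref{direct} (in particular $m=1$, where $AC^1[0,b]=AC[0,b]$), together with the elementary fact that $\mathcal{C}^1[0,b]\subset AC[0,b]$. As an alternative, self-contained route one can avoid the fractional integrals altogether and argue by descent: if $f\in AC^{k}[0,b]$ then $f\in\mathcal{C}^{k-1}[0,b]\subset\mathcal{C}^{k-2}[0,b]$ and $D^{k-2}f\in\mathcal{C}^{1}[0,b]\subset AC[0,b]$, whence $f\in AC^{k-1}[0,b]$; iterating this step from $k=n$ down to $k=m+1$ yields $AC^n[0,b]\subset AC^m[0,b]$.
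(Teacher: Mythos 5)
Your proof is correct, and its main line is essentially the paper's: decompose $AC^n[0,b]$ via Remark \ref{direct} and check that each summand lands in $AC^m[0,b]$, with the Index Law giving $I_{0^+}^n L^1[0,b]=I_{0^+}^m\bigl(I_{0^+}^{n-m}L^1[0,b]\bigr)\subset I_{0^+}^m L^1[0,b]\subset AC^m[0,b]$ exactly as in the paper. The one place you diverge is the polynomial summand: the paper splits $\langle\{1,\dots,t^{n-1}\}\rangle$ further into $\langle\{1,\dots,t^{m-1}\}\rangle$ (absorbed by the polynomial part of $AC^m$) and $\langle\{t^m,\dots,t^{n-1}\}\rangle$, and shows the latter sits inside $I_{0^+}^m L^1[0,b]$ because these monomials are $m$-fold integrals of lower-degree monomials; you instead observe directly from the definition of $AC^m$ that any polynomial is $\mathcal{C}^{m-1}$ with Lipschitz (hence absolutely continuous) $(m-1)$-st derivative. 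Both are valid; the paper's version is the one that foreshadows the fractional analogue (where deciding which powers $t^{\gamma}$ lie in $I_{0^+}^{\alpha}L^1[0,b]$ becomes the whole point, via Remark \ref{Example}), while yours is shorter and self-contained. Your alternative descent argument $AC^k\subset AC^{k-1}$ is also sound and bypasses the decomposition entirely, though it loses that connection to the fractional setting.
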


\begin{proof} We note that \[AC^{n}[0,b]= \left\langle\{ 1, t, \dots, t^{m-1}\}\right\rangle \oplus \left\langle\{t^m, \dots, t^{n-1}\}\right\rangle \oplus I_{0^+}^n L^1[0,b]\] and we make the following straightforward claims
\begin{itemize}
\item $\left\langle\{ 1, t, \dots, t^{m-1}\}\right\rangle \subset AC^{m}[0,b]$,
\item $I_{0^+}^n L^1[0,b] \subset I_{0^+}^m L^1[0,b] \subset AC^{m}[0,b]$.
\end{itemize}
Therefore, we only need to check $\left\langle\{t^m, \dots, t^{n-1}\}\right\rangle \subset AC^{m}[0,b]$. We will only need to see that, indeed, \[\left\langle\{t^m, \dots, t^{n-1}\}\right\rangle \subset I_{0^+}^{m}L^1[0,b],\] but this is trivial since \[\left\langle\{t^m, \dots, t^{n-1}\}\right\rangle \subset I_{0^+}^{m}\left(\left\langle\{t^1, \dots, t^{n-m-1}\}\right\rangle\right).\]
\end{proof}

Although the previous result seems pretty immediate and irrelevant, it hides the key for a successful treatment of the fractional case. In the next part of the paper, we will reproduce the natural construction of the fractional analogue of the spaces $AC^n[0,b]$. For this construction, already presented in \cite{Samko}, it is not true that the space of order $\alpha$ is contained in the space of order $\beta$ if $\alpha > \beta$. Indeed, this particular behaviour will imply the existence of functions that can differentiated $\alpha$ times, but not $\beta$ times, which is surprising since we are assuming that $\alpha > \beta$.

\subsubsection{The fractional abstraction}

We define the Riemann-Liouville fractional derivative as the left inverse operator for the fractional integral. After that, an easy analytical expression for its computation, available in the classical literature, for instance \cite{Samko}, follows

\begin{definition} \label{inversas} Consider $\alpha \geq 0$. We define the Riemann-Liouville fractional derivative of order $\alpha$ (and base point $0$) as the left inverse of the corresponding Riemann-Liouville fractional integral, meaning
\begin{align*} D_{0^+}^{\alpha} I_{0^+}^{\alpha} f = f,
\end{align*}
for every $f \in L^1[0,b]$. 
\end{definition}

We should note that the Riemann-Liouville fractional derivative is well defined, due to the injectivity of the fractional integral, recall Proposition \ref{propo}. Moreover, it will be a surjective operator from $I_{0^+}^{\alpha}L^1[0,b]$ to $L^1[0,b]$. However, it is clear that we are missing something if we pretend that $D_{0^+}^{\alpha}$ matches perfectly the usual derivative when $\alpha$ is an integer. In particular, observe that for an integer value of $\alpha$, Definition \ref{inversas} only describes the behaviour of $D^{\alpha}$ over the space $I_{0^+}^{\alpha}L^1[0,b]$, but we are missing the behaviour over the complementary part in $AC^{\alpha}[0,b]$, which is $\ker D^{\alpha}$.

It happens that it is possible to describe Definition \ref{inversas} more explicitly, since the left inverse for $I_{0^+}^{\alpha}$ is clearly $D_{0^+}^{\alpha}=D_{0^+}^{\lceil \alpha \rceil}I_{0^+}^{\lceil \alpha \rceil - \alpha}$, due to the Fundamental Theorem of Calculus \ref{absolute} and Proposition \ref{propo}. Thus, one could define $D_{0^+}^{\alpha}$ in a more general space than $I_{0^+}^{\alpha}L^1[0,b]$, since the only necessary condition to define $D_{0^+}^{\lceil \alpha \rceil}I_{0^+}^{\lceil \alpha \rceil - \alpha}f$ is to ensure that $I_{0^+}^{\lceil \alpha \rceil - \alpha}f \in AC^{\lceil \alpha \rceil}[0,b]$. Hence, the following definition makes sense.

\begin{definition} \label{RLDerEsp} For each $\alpha >0$ we construct the following space $$\mathcal{X}_\alpha=\left(I_{0^+}^{\lceil \alpha \rceil -\alpha}\right)^{-1}\left(AC^{\lceil \alpha \rceil}[0,b]\right),$$ which will be called the space of functions with summable fractional derivative of order $\alpha$. If $\alpha=0$, we define $\mathcal{X}_\alpha=L^1[0,b]$.
\end{definition}

\begin{remark} Therefore, functions of $\mathcal{X}_{\alpha}$ are defined as the ones producing a function in $AC^{\lceil \alpha \rceil}[0,b]$ after being integrated $\lceil \alpha \rceil -\alpha$ times. This new function can be differentiated $\lceil \alpha \rceil$ times in the weak sense of Fundamental Theorem of Calculus \ref{absolute}.
\end{remark}

Now we see that this definition is, indeed, the same one that was already presented in \cite{Samko} with an explicit expression.

\begin{lemma} \label{equisa} For any $\alpha > 0$ we have that $$\mathcal{X}_\alpha=\left\langle\left\{ t^{\alpha-\lceil \alpha \rceil}, \dots , t^{\alpha-2}, t^{\alpha-1}\right\}\right\rangle \oplus I_{0^+}^{\alpha} L^1[0,b].$$
\end{lemma}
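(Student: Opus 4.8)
The plan is to transport the integer-order decomposition of $AC^{\lceil\alpha\rceil}[0,b]$ from Remark~\ref{direct} through the operator $I_{0^+}^{\lceil\alpha\rceil-\alpha}$, exploiting that this operator is injective and obeys the Index Law. Throughout I write $n=\lceil\alpha\rceil$ and $\nu=n-\alpha\in[0,1)$, so that by Definition~\ref{RLDerEsp} one has $\mathcal{X}_\alpha=\{f\in L^1[0,b]:I_{0^+}^{\nu}f\in AC^n[0,b]\}$, and Remark~\ref{direct} reads $AC^n[0,b]=\langle\{1,t,\dots,t^{n-1}\}\rangle\oplus I_{0^+}^{n}L^1[0,b]$. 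The substitution $k=n-j$ shows that, as $j$ runs through $0,\dots,n-1$, the exponents $j-\nu=\alpha-k$ run through $\alpha-1,\dots,\alpha-n$, so $\{t^{\,j-\nu}:0\le j\le n-1\}=\{t^{\alpha-n},\dots,t^{\alpha-1}\}$ is exactly the set of generators claimed in the statement; note that every exponent $\alpha-k$ with $1\le k\le n$ is $>-1$ (because $\nu<1$), so these monomials, and similarly each $t^{\,j-\nu}$ with $j\ge 0$, lie in $L^1[0,b]$.

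For the inclusion $\supseteq$ I would treat the two summands of the right-hand side separately. By Remark~\ref{Example}, $I_{0^+}^{\nu}t^{\alpha-k}=\tfrac{\Gamma(\alpha-k+1)}{\Gamma(n-k+1)}\,t^{\,n-k}$ for $1\le k\le n$, where $\Gamma(\alpha-k+1)$ is finite and nonzero since $\alpha-k+1$ is never a non-positive integer in that range; this is a monomial of degree $n-k\le n-1$, hence lies in $AC^n[0,b]$, so $t^{\alpha-k}\in\mathcal{X}_\alpha$. For $h\in L^1[0,b]$ the Index Law gives $I_{0^+}^{\nu}I_{0^+}^{\alpha}h=I_{0^+}^{n}h\in I_{0^+}^{n}L^1[0,b]\subset AC^n[0,b]$, so $I_{0^+}^{\alpha}L^1[0,b]\subset\mathcal{X}_\alpha$. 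Hence $\langle\{t^{\alpha-n},\dots,t^{\alpha-1}\}\rangle+I_{0^+}^{\alpha}L^1[0,b]\subset\mathcal{X}_\alpha$.

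For the inclusion $\subseteq$, take $f\in\mathcal{X}_\alpha$ and use Remark~\ref{direct} to write $I_{0^+}^{\nu}f=\sum_{j=0}^{n-1}a_j t^j+I_{0^+}^{n}h$ with $h\in L^1[0,b]$. I would then exhibit the preimage explicitly: using Remark~\ref{Example} once more (legitimate since $j-\nu>-1$), put
\[
\widetilde f:=\sum_{j=0}^{n-1}a_j\,\frac{\Gamma(j+1)}{\Gamma(j-\nu+1)}\,t^{\,j-\nu}\;+\;I_{0^+}^{\alpha}h\ \in L^1[0,b],
\]
which by construction satisfies $I_{0^+}^{\nu}\widetilde f=\sum_{j=0}^{n-1}a_j t^j+I_{0^+}^{n}h=I_{0^+}^{\nu}f$. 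Since $I_{0^+}^{\nu}$ is injective (Proposition~\ref{propo}; when $\nu=0$ it is simply the identity), $f=\widetilde f$, and by the substitution above $\widetilde f$ lies in the claimed right-hand side.

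It remains to see the sum is direct. Suppose $\sum_{k=1}^{n}c_k t^{\alpha-k}=I_{0^+}^{\alpha}h$ for some $h\in L^1[0,b]$, and apply $I_{0^+}^{\nu}$ to obtain $\sum_{k=1}^{n}c_k\tfrac{\Gamma(\alpha-k+1)}{\Gamma(n-k+1)}t^{\,n-k}=I_{0^+}^{n}h$: the left-hand side is a polynomial of degree $\le n-1$, the right-hand side belongs to $I_{0^+}^{n}L^1[0,b]$, so by the directness in Remark~\ref{direct} both sides vanish; linear independence of the monomials $t^{\,n-k}$ together with the nonvanishing of the Gamma coefficients forces every $c_k=0$, and then $h=0$ by injectivity of $I_{0^+}^{n}$, so $I_{0^+}^{\alpha}h=0$. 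Thus the two summands meet only at $0$. The only points requiring a little care are this directness step and keeping track that every exponent stays strictly above $-1$ so that Remark~\ref{Example} applies; beyond that, the argument is a straightforward consequence of the Index Law and the injectivity of the Riemann--Liouville integral.
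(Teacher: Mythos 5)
Your proof is correct and follows essentially the same route as the paper's: both transport the integer-order decomposition $AC^{\lceil\alpha\rceil}[0,b]=\langle\{1,\dots,t^{\lceil\alpha\rceil-1}\}\rangle\oplus I_{0^+}^{\lceil\alpha\rceil}L^1[0,b]$ of Remark~\ref{direct} back through the injective operator $I_{0^+}^{\lceil\alpha\rceil-\alpha}$, using Remark~\ref{Example} to identify the preimages of the monomials. You merely spell out the explicit preimage and the Gamma coefficients that the paper leaves implicit.
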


\begin{proof}
First, we check $\left\langle\left\{ t^{\alpha-\lceil \alpha \rceil}, \dots , t^{\alpha-2}, t^{\alpha-1}\right\}\right\rangle \cap I_{0^+}^{\alpha} L^1[0,b]=\{0\}$. If there is a function $f$ in both addends, then $I_{0^+}^{\lceil \alpha \rceil -\alpha} f$ will be simultaneously a polynomial of degree at most $\lceil \alpha \rceil-1$, and a function in $I_{0^+}^{\lceil \alpha \rceil} L^1[0,b]$. Therefore, $I_{0^+}^{\lceil \alpha \rceil -\alpha} f$ has to be the zero function after repeating the argument in Remark \ref{direct} and, since fractional integrals are injective (Proposition \ref{propo}), $f \equiv 0$.

It is clear that applying $I_{0^+}^{\lceil \alpha \rceil -\alpha}$ to the right hand side we will produce a function $AC^{\lceil \alpha \rceil}[0,b]$. Moreover, it is trivial that any function in $AC^{\lceil \alpha \rceil}[0,b]$ can be obtained in this way in virtue of Remark \ref{Example}. Since the operator $I_{0^+}^{\lceil \alpha \rceil -\alpha}$ is injective, the result follows.
\end{proof}

From the previous lemma, we get this immediate corollary.

\begin{corollary} \label{preimagen} Given $f \in L^1[0,b]$, we have that $f \in I_{0^+}^{\alpha}L^1[0,b]$ if, and only if, $f\in \mathcal{X}_{\alpha}$ and also $D^{s} I_{0^+}^{\lceil \alpha \rceil - \alpha}f(0)= 0$ for each $s \in \{0,\dots, \lceil \alpha \rceil-1\}$.
\end{corollary}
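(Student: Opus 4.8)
The plan is to read the corollary as a refinement of Lemma \ref{equisa}: that lemma decomposes $\mathcal{X}_\alpha$ as the span of the ``low-order monomials'' $t^{\alpha-\lceil\alpha\rceil},\dots,t^{\alpha-1}$ plus $I_{0^+}^\alpha L^1[0,b]$, and the claim is simply that the second summand is cut out inside $\mathcal{X}_\alpha$ by the $\lceil\alpha\rceil$ vanishing conditions $D^s I_{0^+}^{\lceil\alpha\rceil-\alpha}f(0)=0$, $s=0,\dots,\lceil\alpha\rceil-1$. So the first step is to take $f\in L^1[0,b]$, write $n=\lceil\alpha\rceil$ for brevity, and set $g=I_{0^+}^{n-\alpha}f$; by definition $f\in\mathcal{X}_\alpha$ exactly when $g\in AC^n[0,b]$, in which case the quantities $D^s g(0)$ make sense for $s=0,\dots,n-1$ (using that $D^{n-1}g\in AC\subset\mathcal{C}$ and lower derivatives are genuinely continuous).

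For the forward implication, assume $f\in I_{0^+}^\alpha L^1[0,b]$, say $f=I_{0^+}^\alpha h$ with $h\in L^1[0,b]$. Then by the Index Law (Proposition \ref{propo}) $g=I_{0^+}^{n-\alpha}I_{0^+}^\alpha h=I_{0^+}^n h\in I_{0^+}^n L^1[0,b]$, which in particular lies in $AC^n[0,b]$, so $f\in\mathcal{X}_\alpha$; and by the last bullet of Remark \ref{direct} (the direct-sum decomposition of $AC^n[0,b]$, whose $I_{0^+}^n L^1[0,b]$ summand consists precisely of functions with $g(0)=g'(0)=\cdots=g^{(n-1)}(0)=0$) we get $D^s g(0)=0$ for all $s\in\{0,\dots,n-1\}$, which is the asserted condition.

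For the converse, assume $f\in\mathcal{X}_\alpha$ and $D^s g(0)=0$ for $s=0,\dots,n-1$. Since $f\in\mathcal{X}_\alpha$, $g\in AC^n[0,b]$, so by Remark \ref{direct} we may write $g=p+I_{0^+}^n k$ with $p\in\langle\{1,t,\dots,t^{n-1}\}\rangle$ and $k\in L^1[0,b]$. The vanishing of $D^s(I_{0^+}^n k)(0)$ for $s=0,\dots,n-1$ (again from Remark \ref{direct}) forces $D^s p(0)=D^s g(0)=0$ for all those $s$, and the only polynomial of degree at most $n-1$ with that property is $p\equiv0$; hence $g=I_{0^+}^n k=I_{0^+}^{n-\alpha}(I_{0^+}^\alpha k)$. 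Since $I_{0^+}^{n-\alpha}$ is injective (Proposition \ref{propo}) and $I_{0^+}^{n-\alpha}f=g=I_{0^+}^{n-\alpha}(I_{0^+}^\alpha k)$, we conclude $f=I_{0^+}^\alpha k\in I_{0^+}^\alpha L^1[0,b]$, as desired. Alternatively, and perhaps more cleanly, one can argue entirely inside Lemma \ref{equisa}: write $f=q+I_{0^+}^\alpha k$ with $q\in\langle\{t^{\alpha-n},\dots,t^{\alpha-1}\}\rangle$, apply $I_{0^+}^{n-\alpha}$ to see $g=I_{0^+}^{n-\alpha}q+I_{0^+}^n k$, where $I_{0^+}^{n-\alpha}q$ is a (generally nonzero) polynomial of degree $\le n-1$ by Remark \ref{Example}, and observe that the conditions $D^s g(0)=0$ kill exactly that polynomial part, hence its coefficients, hence $q$.

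The routine-but-essential point to get right — and the only place any care is needed — is that the monomials $t^{\alpha-n},\dots,t^{\alpha-1}$ are sent by $I_{0^+}^{n-\alpha}$ to honest polynomials $c_j t^{\,j}$, $j=0,\dots,n-1$, with \emph{nonzero} coefficients (Remark \ref{Example} gives $I_{0^+}^{n-\alpha}t^{\alpha-n+j}=\frac{\Gamma(\alpha-n+j+1)}{\Gamma(j+1)}t^{\,j}$, and the $\Gamma$ values are finite and nonzero since $\alpha-n+j>-1$ for $j\ge 0$ when... one must note $\alpha-n\ge -n+\text{frac part}>-1$ only if $\alpha\notin\mathbb{Z}$; when $\alpha\in\mathbb{Z}$ the exponent $\alpha-n=0$ and the list degenerates, which is consistent). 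So the linear map sending the coefficients of $q$ to the jet $(g(0),\dots,g^{(n-1)}(0))$ of the polynomial part is a bijection, and ``$D^s g(0)=0$ for all $s$'' is equivalent to ``$q=0$'', which is equivalent to $f\in I_{0^+}^\alpha L^1[0,b]$. This is exactly the content of the corollary, and it follows with no further work from Lemma \ref{equisa} and Remark \ref{Example}.
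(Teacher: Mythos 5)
Your proof is correct and follows exactly the route the paper intends: the paper states this as an immediate consequence of Lemma \ref{equisa}, and you have simply made explicit that the vanishing of the jet $D^{s} I_{0^+}^{\lceil \alpha \rceil - \alpha}f(0)$ kills the polynomial summand in the decomposition of $\mathcal{X}_{\alpha}$ (equivalently, of $AC^{\lceil \alpha \rceil}[0,b]$ via Remark \ref{direct}), leaving only $I_{0^+}^{\alpha}L^1[0,b]$. The computation of the nonzero coefficients $\Gamma(\alpha-\lceil\alpha\rceil+j+1)/\Gamma(j+1)$ and the injectivity of $I_{0^+}^{\lceil\alpha\rceil-\alpha}$ are exactly the right details to check, and both are handled correctly.
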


Hence, we can upgrade Definition \ref{inversas} in the following way, coinciding with Definition 2.4 in \cite{Samko}.

\begin{definition} \label{DerivadaFrac} Consider $\alpha \geq 0$ and $f \in \mathcal{X}_{\alpha}$. We define the Riemann-Liouville fractional derivative of order $\alpha$ (and base point $0$) as
\begin{align*}D_{0^+}^{\alpha} f :=  D_{0^+}^{\lceil \alpha \rceil} \circ I_{0^+}^{\lceil \alpha \rceil - \alpha} f,
\end{align*}
where the last derivative may be understood in the weak sense exposed previously.
\end{definition}

\subsubsection{Properties of the space $\mathcal{X}_{\alpha}$}

We want to fully understand how $D_{0^+}^{\alpha}$ works over $\mathcal{X}_{\alpha}$ and the most natural way is to split the problem into two parts, as suggested by Lemma \ref{equisa}. We already know that $D_{0^+}^{\alpha}$ is the left inverse for $I_{0^+}^{\alpha}$, so we should study how does it behave when applied to $\left\langle\left\{ t^{\alpha-\lceil \alpha \rceil}, \dots , t^{\alpha-2}, t^{\alpha-1}\right\}\right\rangle$. It is a well known and straightforward computation that \[D_{0^+}^{\alpha}\left(\left\langle\left\{ t^{\alpha-\lceil \alpha \rceil}, \dots , t^{\alpha-2}, t^{\alpha-1}\right\}\right\rangle\right)=\{0\}.\] and, hence the kernel of $D_{0^+}^{\alpha}$ has dimension $\lceil \alpha \rceil$ and is given by \[\ker D_{0^+}^{\alpha}=\left\langle\left\{ t^{\alpha-\lceil \alpha \rceil}, \dots , t^{\alpha-2}, t^{\alpha-1}\right\}\right\rangle.\]

Moreover, we should note that if $f(t)=a_0 \, t^{\alpha-\lceil \alpha \rceil}+\cdots+a_{\lceil \alpha \rceil-1}\,t^{\alpha -1}$, with $a_j\in \mathbb{R}$ for each $j \in \left\{0,1,\dots,\lceil \alpha \rceil-1\right\}$, it is immediate to do the following calculations from Remark \ref{Example}, where $j \in \{1,\cdots,\lceil \alpha \rceil -1\}$,

\begin{align} \label{Iniciales}
\begin{split}
\left( I_{0^+}^{\lceil \alpha \rceil - \alpha}f \right)(0)=a_0\,{\Gamma(\alpha-\lceil \alpha \rceil+1)},\\
\left( D_{0^+}^{\alpha - \lceil \alpha \rceil + j}f \right)(0)=a_j\,\Gamma(\alpha - \lceil \alpha \rceil + j+1).
\end{split}
\end{align}
The previous formula generalizes the obtention of the Taylor coefficients for a fractional case and it can be used to codify functions in $\mathcal{X}_{\alpha}$ modulo $I_{0^+}^{\alpha}L^1[0,b]$, since 
\begin{align} \label{Iniciales2}
\begin{split}
\left( I_{0^+}^{\lceil \alpha \rceil - \alpha}g \right)(0)=0,\\
\left( D_{0^+}^{\alpha - \lceil \alpha \rceil + j}g \right)(0)=0,
\end{split}
\end{align}
for $g \in I_{0^+}^{\alpha}L^1[0,b]$, due to Proposition \ref{propo}.

\subsubsection{Intersection of fractional summable spaces}

In general, fractional differentiation presents some extra problems that do not exist when dealing with fractional integrals. One of the most famous ones is that there is no Index Law for fractional differentiation. The main reason underlying all these complications is the following one.

\begin{remark} \label{nocont} The condition $\alpha > \beta$ does not ensure $\mathcal{X}_{\alpha} \subset \mathcal{X}_{\beta}$, although the condition $\alpha - \beta \in \mathbb{Z}^+$ trivially does. This makes Riemann-Liouville derivatives somehow tricky, since the differentiability for a higher order does not imply, necessarily, the differentiability for a lower order with different decimal part. In particular, this fact has critical implications when considering fractional differential equations, as we shall see in the paper, since the function has to be differentiable for each order involved in the equation. These problems give an idea of why can be a logical thought to work with fractional integrals instead, and try to inherit the results obtained for the case of fractional derivatives; instead of proving them for fractional derivatives directly.
\end{remark}

Consequently, it is interesting to compute the exact structure of a finite intersection of such spaces of different orders. To the best of our knowledge, this result is not available in the extant literature.

\begin{lemma}\label{estructura}
Consider $\beta_n > \dots > \beta_1 \geq 0$, we have that $$ \bigcap_{j=1}^n \mathcal{X}_{\beta_j} =\left\langle\left\{ t^{\beta_n-\lceil \beta_n-\beta_* \rceil}, \dots, t^{\beta_n -1} \right\}\right\rangle \oplus I_{0^+}^{\beta_n} L^1[0,b],$$ where $\beta_*$ is the maximum $\beta_j$ such that $\beta_n - \beta_j \not \in \mathbb{Z}^+$. If such a $\beta_j$ does not exist, the result still holds after defining $\beta_*=0$.

In particular, $I_{0^+}^{\beta_n} L^1[0,b] \subset \bigcap_{j=1}^n \mathcal{X}_{\beta_j}$ and it has codimension $\lceil \beta_n-\beta_* \rceil$.
\end{lemma}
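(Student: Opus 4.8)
The plan is to reduce the whole statement, by Lemma \ref{equisa}, to a computation inside the finite-dimensional space of ``singular'' monomials attached to $\beta_n$, and then to isolate as the only genuine difficulty an assertion of linear independence of power functions modulo the images $I_{0^+}^{\bullet}L^1[0,b]$. First, by Lemma \ref{equisa} write $\mathcal{X}_{\beta_n}=V_n\oplus I_{0^+}^{\beta_n}L^1[0,b]$ with $V_n=\left\langle\{\,t^{\beta_n-\lceil\beta_n\rceil},\dots,t^{\beta_n-1}\,\}\right\rangle$. By the Index Law (Proposition \ref{propo}), $I_{0^+}^{\beta_n}L^1[0,b]\subset I_{0^+}^{\beta_j}L^1[0,b]\subset\mathcal{X}_{\beta_j}$ for every $j$, so $I_{0^+}^{\beta_n}L^1[0,b]\subset\bigcap_{j=1}^n\mathcal{X}_{\beta_j}$; since also $\bigcap_{j=1}^n\mathcal{X}_{\beta_j}\subset\mathcal{X}_{\beta_n}$, decomposing each element of the intersection according to $\mathcal{X}_{\beta_n}=V_n\oplus I_{0^+}^{\beta_n}L^1[0,b]$ shows that $\bigcap_{j=1}^n\mathcal{X}_{\beta_j}=\bigl(V_n\cap\bigcap_{j=1}^n\mathcal{X}_{\beta_j}\bigr)\oplus I_{0^+}^{\beta_n}L^1[0,b]$. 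It therefore suffices to identify the finite-dimensional subspace $V_n\cap\bigcap_{j=1}^n\mathcal{X}_{\beta_j}$ of $V_n$.

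For a single index $j$ there are two cases. If $\beta_n-\beta_j$ is a non-negative integer, then $\mathcal{X}_{\beta_n}\subset\mathcal{X}_{\beta_j}$ by Remark \ref{nocont}, so the condition $V_n\subset\mathcal{X}_{\beta_j}$ imposes nothing (this covers $j=n$). If $\beta_n-\beta_j$ is not an integer, then each exponent $\beta_n-k$ has a different decimal part from $\beta_j$, hence is not one of the singular exponents $\beta_j-\lceil\beta_j\rceil,\dots,\beta_j-1$ of $\mathcal{X}_{\beta_j}$, so by Lemma \ref{equisa} and Remark \ref{Example} one has $t^{\beta_n-k}\in\mathcal{X}_{\beta_j}$ exactly when $\beta_n-k>\beta_j-1$, i.e.\ $k\le\lceil\beta_n-\beta_j\rceil$. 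Using the definition of $\beta_*$ as the largest $\beta_j$ of the second type, together with $\beta_j\le\beta_*\Rightarrow\lceil\beta_n-\beta_*\rceil\le\lceil\beta_n-\beta_j\rceil$, we conclude that $t^{\beta_n-k}\in\bigcap_{j=1}^n\mathcal{X}_{\beta_j}$ for every $k\le\lceil\beta_n-\beta_*\rceil$, hence $\left\langle\{\,t^{\beta_n-\lceil\beta_n-\beta_*\rceil},\dots,t^{\beta_n-1}\,\}\right\rangle\subset V_n\cap\bigcap_{j=1}^n\mathcal{X}_{\beta_j}$ (when no $\beta_j$ is of the second type, $\beta_*=0$, every $\beta_j$ is of the first type, and this inclusion reads $V_n\subset\bigcap_{j=1}^n\mathcal{X}_{\beta_j}$).

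The main point is the reverse inclusion: if $p=\sum_{k=1}^{\lceil\beta_n\rceil}a_k\,t^{\beta_n-k}$ lies in $\bigcap_{j=1}^n\mathcal{X}_{\beta_j}$, then $a_k=0$ for $k>\lceil\beta_n-\beta_*\rceil$. If $\beta_*=0$ there is nothing to prove; otherwise $\beta_*$ is one of the $\beta_j$, so $p\in\mathcal{X}_{\beta_*}$, and splitting $p=p_{\mathrm{big}}+p_{\mathrm{small}}$ where $p_{\mathrm{big}}$ collects the terms with $\beta_n-k>\beta_*-1$ (which lie in $I_{0^+}^{\beta_*}L^1[0,b]$ by Remark \ref{Example}), we get $p_{\mathrm{small}}\in\mathcal{X}_{\beta_*}$, a combination of monomials whose exponents are all $<\beta_*-1$ and none a singular exponent of $\mathcal{X}_{\beta_*}$. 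The conclusion then follows from the auxiliary fact: \emph{a nonzero finite combination $\sum_i a_i t^{\gamma_i}$ of distinct power functions with $\gamma_i>-1$, none of the $\gamma_i$ equal to a singular exponent $\alpha-\ell$ ($\ell\in\mathbb{Z}$) of $\mathcal{X}_\alpha$, can belong to $\mathcal{X}_\alpha$ only if every $\gamma_i>\alpha-1$.} I would prove this by applying the injective operator $I_{0^+}^{\lceil\alpha\rceil-\alpha}$, which by Definition \ref{RLDerEsp} sends $\mathcal{X}_\alpha$ into $AC^{\lceil\alpha\rceil}[0,b]$ and by Remark \ref{Example} turns the combination into one of power functions $t^{\delta_i}$ with $\delta_i>-1$ and $\delta_i\notin\{0,\dots,\lceil\alpha\rceil-1\}$; the terms with $\delta_i>\lceil\alpha\rceil-1$ lie automatically in $I_{0^+}^{\lceil\alpha\rceil}L^1[0,b]$ (Remark \ref{Example}), so by Remark \ref{direct} one may assume all surviving $\delta_i\in(-1,\lceil\alpha\rceil-1)$; then, letting $\delta_0$ be the smallest such exponent with nonzero coefficient and $\ell=\max\{0,\lceil\delta_0\rceil\}\in\{0,\dots,\lceil\alpha\rceil-1\}$, the $\ell$-th derivative of this $AC^{\lceil\alpha\rceil}[0,b]$ function behaves near $t=0$ like a nonzero multiple of $t^{\delta_0-\ell}$ with $\delta_0-\ell\in(-1,0)$, which is impossible since that function lies in $\mathcal{C}^{\lceil\alpha\rceil-1}[0,b]\subset\mathcal{C}^{\ell}[0,b]$. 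Applying this with $\alpha=\beta_*$ to $p_{\mathrm{small}}$ forces $p_{\mathrm{small}}=0$, i.e.\ $a_k=0$ whenever $\beta_n-k\le\beta_*-1$, that is $k>\lceil\beta_n-\beta_*\rceil$.

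Combining the two inclusions gives $V_n\cap\bigcap_{j=1}^n\mathcal{X}_{\beta_j}=\left\langle\{\,t^{\beta_n-\lceil\beta_n-\beta_*\rceil},\dots,t^{\beta_n-1}\,\}\right\rangle$, which together with the first step yields the stated formula; these $\lceil\beta_n-\beta_*\rceil$ monomials are linearly independent and, being a subfamily of the direct-sum basis of Lemma \ref{equisa}, meet $I_{0^+}^{\beta_n}L^1[0,b]$ only at $0$, so $I_{0^+}^{\beta_n}L^1[0,b]$ has codimension $\lceil\beta_n-\beta_*\rceil$ in the intersection. The only delicate ingredient is the auxiliary fact used in the third step, i.e.\ the linear-independence-modulo-$I_{0^+}^{\bullet}L^1[0,b]$ statement; the reduction step, the surviving-monomials computation, and the codimension count are routine bookkeeping with Lemma \ref{equisa}, Remark \ref{Example} and the definition of $\beta_*$.
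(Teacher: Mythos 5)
Your proof is correct and follows essentially the same route as the paper: decompose along Lemma \ref{equisa}, observe that $I_{0^+}^{\beta_n} L^1[0,b]$ always survives in the intersection, and reduce everything to deciding which monomials $t^{\beta_n-k}$ belong to each $\mathcal{X}_{\beta_j}$, with the same dichotomy on whether $\beta_n-\beta_j$ is an integer. The one place where you go beyond the paper is your ``auxiliary fact'': the paper merely asserts, as its ``key remark'', that a combination $\sum_{\gamma} c_\gamma t^{\gamma}$ lies in $\mathcal{X}_{\beta_j}$ if and only if each individual monomial does, whereas you actually prove the nontrivial direction by applying $I_{0^+}^{\lceil \beta_* \rceil - \beta_*}$, discarding the part already in $I_{0^+}^{\lceil \beta_* \rceil} L^1[0,b]$ via Remark \ref{direct}, and deriving a contradiction from the blow-up at $t=0$ of the $\lceil \delta_0 \rceil$-th derivative of the lowest surviving power (legitimate, since the lowest exponent dominates near the origin and $\lceil \delta_0 \rceil \leq \lceil \beta_* \rceil - 1$, so that derivative would have to be continuous). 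So your write-up is in fact more complete than the paper's at the only genuinely delicate step; the remaining reductions and the codimension count match the paper's bookkeeping.
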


\begin{proof}
It is obvious that $\bigcap_{j=1}^n \mathcal{X}_{\beta_j} \subset \mathcal{X}_{\beta_n}$. Hence,
\begin{equation}\label{ecurara}
\bigcap_{j=1}^n \mathcal{X}_{\beta_j} \subset \mathcal{X}_{\beta_n} = \left\langle \left\{ t^{\beta_n-\lceil \beta_n \rceil}, \dots, t^{\beta_n -1} \right\}\right\rangle \oplus I_{0^+}^{\beta_n} L^1[0,b].
\end{equation}
It is clear that $I_{0^+}^{\beta_n} L^1[0,b]$ lies in $\bigcap_{j=1}^n \mathcal{X}_{\beta_j}$, so the remaining question is to see when a linear combination of the $t^{\beta_n-k}$, where $k \in \{1,\dots,\lceil \beta_n \rceil \}$, lies in $\bigcap_{j=1}^n \mathcal{X}_{\beta_j}$.

The key remark is to realise that for any finite set $\mathcal{F} \subset (-1,+\infty)$ $$\sum_{\gamma \in \mathcal{F}} c_{\gamma}\, t^{\gamma} \in \mathcal{X}_{\beta_j}, \textnormal{ where } c_{\gamma} \neq 0 \textnormal{ for each } \gamma \in \mathcal{F},$$ if and only if $\gamma-\beta_j>-1$ or $\gamma-\beta_j \in \mathbb{Z}^{-}$ for every $\gamma \in \{1,\dots,r\}$ with $c_{\gamma}\neq 0$. Consequently, it is enough study when $t^{\beta_n-k}$ lies in $\mathcal{X}_{\beta_j}$, and there are two options:
\begin{itemize}
\item If $\beta_n-\beta_j \in \mathbb{Z}^+$, we know that $t^{\beta_n-k} \in \mathcal{X}_{\beta_j}$. This happens because either $\beta_n -k \in \left\{\beta_j-\lceil \beta_j \rceil, \dots, \beta_j -1 \right\}$ or $\beta_n-k>\beta_j-1$.
\item In other case, we need $\beta_n-k>\beta_j-1$, that can be rewritten as $k<\beta_n-\beta_j+1$. If we want this to happen for every $j$ such that $\beta_n-\beta_j \not \in \mathbb{Z}$, the condition is equivalent to $k<\beta_n-\beta_*+1$, where $\beta_*$ is the greatest $\beta_j$ such that $\beta_n-\beta_j \not \in \mathbb{Z}$. Indeed, it can be rewritten as $1 \leq k \leq \lceil \beta_n-\beta_* \rceil$.
\end{itemize}
Therefore, the coefficients which are not necessarily null are the ones associated to $t^{\beta_n-k}$, where $k \in \{1,\dots,\lceil \beta_n-\beta_* \rceil\}$.
\end{proof}

\begin{remark} \label{interse} Due to Lemma \ref{estructura}, any affine subspace of $ \bigcap_{j=1}^n \mathcal{X}_{\beta_j}$ with dimension strictly higher than $\lceil \beta_n - \beta_* \rceil$ contains two distinct functions whose difference lies in $I_{0^+}^{\beta_n}L^1[0,b]$. Thus, in any vector subspace of $ \bigcap_{j=1}^n \mathcal{X}_{\beta_j}$ with dimension strictly higher than $\lceil \beta_n - \beta_* \rceil$, there are infinitely many functions that lie in $I_{0^+}^{\beta_n}L^1[0,b]$.
\end{remark}

\subsection{Fractional integral equations}

Consider the fractional integral equation \[\left(c_n\, I_{0^+}^{\gamma_n}+\cdots+c_1\,I_{0^+}^{\gamma_1}+I_{0^+}^{\gamma_0}\right)x(t)=\widetilde{f}(t),\] where $f \in L^1[0,b]$, $\gamma_n > \cdots > \gamma_0 \geq 0$ and assume that it has a solution $x \in L^1[0,b]$. Since $I_{0^+}^{\gamma_n} L^1[0,b] \subset \cdots \subset I_{0^+}^{\gamma_0} L^1[0,b]$, the left hand side lies in $I_{0^+}^{\gamma_0} L^1[0,b]$ and the condition $f \in I_{0^+}^{\gamma_0} L^1[0,b]$ is mandatory to ensure the existence of solution. In that case, we can apply the operator $D_{0^+}^{\gamma_0}$ to the previous equation and we obtain 
\begin{align}\label{fundamental}
\left(c_n\,I_{0^+}^{\alpha_n}+\cdots+c_1\,I_{0^+}^{\alpha_1}+\textnormal{Id}\right)x(t)=f(t),
\end{align} where $D_{0^+}^{\gamma_0}\widetilde{f}=f$ and $\alpha_j=\gamma_j-\gamma_0$ for $j \in \{1,\dots,n\}$. If we use the notation \[\Upsilon=c_n\, I_{0^+}^{\alpha_n}+\cdots+c_1\,I_{0^+}^{\alpha_1},\] Equation (\ref{fundamental}) can be rewritten as
\begin{align}\label{fundamental2}
\left(\Upsilon+\textnormal{Id}\right)x(t)=f(t).
\end{align}

Therefore, it is relevant to study the properties of the operator $\Upsilon+\textnormal{Id}$ from $L^1[0,b]$ to itself to understand Equation (\ref{fundamental2}).

\subsubsection{$\Upsilon+\textnormal{Id}$ is bounded}

This claim is a very well-known result, since each addend in $\Upsilon$ is a bounded operator, and Id too, see \cite{KiSrTr}. It is also possible to prove this, just recalling that $\Upsilon$ is a convolution operator with kernel in $L^1[0,b]$ and, thus, a bounded operator.

\subsubsection{$\Upsilon+\textnormal{Id}$ is injective}

To prove that $\Upsilon+\textnormal{Id}$ is injective we will need a result concerning the annulation of a convolution. In a few words, we need to know what are the possibilities for the factors of a convolution, provided that the obtained result is the zero function. Roughly speaking, the classical result in this direction, known as Titchmarsh Theorem, states that the integrand of the convolution from $0$ to $t$ is always zero, independently of $t$.

\begin{theorem}[Titchmarsh, \cite{Titchmarsh}] \label{Titchmarsh}
Suppose that $f,g \in L^1[0,b]$ are such that $f*_0 g \equiv 0$. Then, there exist $\lambda,\mu \in \mathbb{R}^+$ such that the following three conditions hold:
\begin{itemize}
\item $f \equiv 0$ in the interval $[0,\lambda]$,
\item $g \equiv 0$ in the interval $[0,\mu]$,
\item $\lambda+\mu \geq b$.
\end{itemize}
\end{theorem}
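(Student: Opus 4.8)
The plan is to replace the convolution identity by a \emph{multiplicative} identity between entire functions, via the Laplace transform, and then to read the desired support information off the growth of those functions along the positive real axis. The statement is really about the left endpoints of the essential supports of $f$ and $g$, so I will phrase it that way from the start.

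After disposing of the trivial case in which $f$ or $g$ vanishes a.e.\ on $[0,b]$ (take $\lambda=b$ or $\mu=b$), extend $f$ and $g$ by zero outside $[0,b]$ and let $\lambda$, $\mu$ be the largest numbers with $f\equiv 0$ on $[0,\lambda]$ and $g\equiv 0$ on $[0,\mu]$; the first two assertions of the theorem then hold by construction. Since $f(t-s)g(s)$ vanishes identically on $\{0\le s\le t\}$ whenever $t<\lambda+\mu$, the convolution $f*_0 g$ is automatically zero on $[0,\lambda+\mu)$, so the only genuine content of the theorem is the reverse estimate $\lambda+\mu\ge b$. Writing $h:=f*_0 g$ (extended by zero), the hypothesis says $h\equiv 0$ on $[0,b]$, hence $h$ is supported in $[b,2b]$.

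Now introduce the Laplace transforms $F(z)=\int_0^b f(t)e^{-zt}\,dt$, $G(z)=\int_0^b g(t)e^{-zt}\,dt$ and $H(z)=\int_0^{2b}h(t)e^{-zt}\,dt$; these are entire functions of exponential type and $H=F\cdot G$. The left endpoint of the support of an $L^1$ function on a bounded interval is encoded by the decay of its Laplace transform along the positive real axis: for $\varphi$ supported in $[a,\infty)$ with $a$ the left endpoint of its essential support, $\limsup_{x\to+\infty}x^{-1}\log|\widehat\varphi(x)|=-a$, the upper bound being immediate and the matching lower bound a uniqueness statement for the Laplace transform (decay faster than $e^{-(a+\varepsilon)x}$ would force $\varphi$ to vanish on $[a,a+\varepsilon]$, which one proves by a Phragm\'en--Lindel\"of argument in a half-plane). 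Writing $\mathrm{ind}(\psi):=\limsup_{x\to+\infty}x^{-1}\log|\psi(x)|$, this gives $\mathrm{ind}(F)=-\lambda$, $\mathrm{ind}(G)=-\mu$, and $\mathrm{ind}(H)\le -b$. The theorem would now follow at once if $\mathrm{ind}(F\cdot G)=\mathrm{ind}(F)+\mathrm{ind}(G)$ — more precisely, if the indicator diagram of a product of two entire functions of exponential type were the Minkowski sum of the individual indicator diagrams — since then $-b\ge\mathrm{ind}(H)=\mathrm{ind}(F)+\mathrm{ind}(G)=-\lambda-\mu$.

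The real work, and the main obstacle, is precisely this additivity of indicators. It is \emph{false} for arbitrary entire functions of exponential type — cancellation can destroy the growth of a product along a fixed ray — and for the Laplace transforms occurring here it already encodes the theorem; so one proves instead the general statement for arbitrary entire functions of exponential type, which does not mention convolution. That proof needs the classical machinery: the Phragm\'en--Lindel\"of principle, a count of zeros via Jensen's formula (the zeros of $FG$ are those of $F$ together with those of $G$, so the linear densities, hence the circular means $\frac1{2\pi}\int_0^{2\pi}h_F(\theta)\,d\theta$ of the indicators, add), and the resulting regularity of growth, which upgrades the averaged identity to the pointwise one. This is exactly the content of Titchmarsh's original argument and of its later refinements. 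An alternative that sidesteps entire-function theory is to reproduce one of the real-variable proofs — for instance Mikusi\'nski's, or that of Dufresnoy and Crum — at the cost of a longer and more intricate argument; I would present the complex-analytic route, as it is the one that makes the identity ``exponential type $=$ support width'' most transparent.
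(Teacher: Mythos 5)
The paper does not prove this theorem at all: it states it as a classical result and explicitly defers the proof to Titchmarsh's original article, so there is no in-paper argument to compare yours against. Judged on its own terms, your proposal is a sensible and correctly organised \emph{outline} of the complex-analytic proof, but it is not a proof. The reduction to the inequality $\lambda+\mu\ge b$, the passage to $H=F\cdot G$, the easy bound $\mathrm{ind}(H)\le -b$, and the identification $\mathrm{ind}(F)=-\lambda$, $\mathrm{ind}(G)=-\mu$ are all routine; as you yourself say, ``the real work, and the main obstacle,'' is the additivity $\mathrm{ind}(FG)=\mathrm{ind}(F)+\mathrm{ind}(G)$ for the functions at hand, and at exactly that point you stop and state that the step ``is exactly the content of Titchmarsh's original argument and of its later refinements.'' A proof that reduces the theorem to an equivalent unproven statement and then cites the source being proved has a genuine gap, and here the gap carries essentially the entire content of the theorem.

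Two further points on the skipped step. First, your text is internally inconsistent: you correctly observe that additivity of indicators is \emph{false} for arbitrary entire functions of exponential type, and then propose to ``prove the general statement for arbitrary entire functions of exponential type.'' What is actually needed is additivity for functions of completely regular growth (equivalently, here, for the Cartwright class, to which Laplace transforms of compactly supported $L^1$ functions belong); showing that these transforms have completely regular growth, and that for such functions the averaged Jensen-type identity upgrades to a pointwise one along the positive real axis despite the $\limsup$ in the definition of $\mathrm{ind}$, is precisely the technical content you would have to supply. Second, the lower bound in $\mathrm{ind}(F)=-\lambda$ (that $F$ cannot decay faster than $e^{-\lambda x}$ on the positive real axis) is itself a nontrivial Phragm\'en--Lindel\"of statement that you assert rather than prove. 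If your intention is to cite the theorem as known --- which is exactly what the paper does --- that is legitimate; if the intention is to prove it, these two items must be written out, or else one of the self-contained real-variable proofs you mention should be given in full.
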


We will not provide the proof of this result, since it is a bit technical and it is not interesting from the point of view of the work that we are going to develop. The proof can be consulted in \cite{Titchmarsh}.

\begin{remark} \label{RTi} In particular, Titchmarsh Theorem states that the operator $C_0(f):L^1[0,b] \longrightarrow L^1[0,b]$ is injective, provided that $f \in L^1[0,b]$ and that $f$ is not null at any interval $[0,\lambda]$ for $\lambda >0$.
\end{remark}

\begin{corollary} The operator $\Upsilon+\textnormal{Id}$ described in (\ref{fundamental2}) is injective.
\end{corollary}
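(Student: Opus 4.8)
The plan is to exploit the convolutional nature of $\Upsilon$. Since $\alpha_j=\gamma_j-\gamma_0>0$ for every $j\in\{1,\dots,n\}$, each summand $c_j\,I_{0^+}^{\alpha_j}$ is the convolution operator $C_0$ with kernel $c_j\,t^{\alpha_j-1}/\Gamma(\alpha_j)$, and all of these kernels belong to $L^1[0,b]$ because $\alpha_j-1>-1$. Hence $\Upsilon=C_0(k)$ with $k(t)=\sum_{j=1}^n c_j\,t^{\alpha_j-1}/\Gamma(\alpha_j)\in L^1[0,b]$, and the identity $\left(\Upsilon+\textnormal{Id}\right)x=0$ becomes the homogeneous Volterra integral equation of the second kind $x(t)=-\int_0^t k(t-s)\,x(s)\,ds$ for almost every $t\in[0,b]$. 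It therefore suffices to show that this equation forces $x\equiv 0$.

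First I would set $\lambda:=\sup\{\tau\in[0,b]\,:\,x\equiv 0 \text{ a.e. on }[0,\tau]\}$, which is well defined since $\tau=0$ lies in the set, and observe that $x\equiv 0$ a.e. on $[0,\lambda]$. Arguing by contradiction, suppose $\lambda<b$ and choose $\delta>0$ with $\lambda+\delta\le b$ and $\int_0^{\delta}|k(u)|\,du<1$; such a $\delta$ exists because $k\in L^1[0,b]$, so $\int_0^{\delta}|k|\to 0$ as $\delta\to 0^+$. For $t\in[\lambda,\lambda+\delta]$ the vanishing of $x$ on $[0,\lambda]$ gives $x(t)=-\int_{\lambda}^t k(t-s)\,x(s)\,ds$; integrating the absolute value over $[\lambda,\lambda+\delta]$ and applying Fubini's theorem yields $\|x\|_{L^1[\lambda,\lambda+\delta]}\le\bigl(\int_0^{\delta}|k(u)|\,du\bigr)\,\|x\|_{L^1[\lambda,\lambda+\delta]}$, whence $\|x\|_{L^1[\lambda,\lambda+\delta]}=0$. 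Combined with $x\equiv 0$ on $[0,\lambda]$ this contradicts the maximality of $\lambda$, so $\lambda=b$ and $x\equiv 0$; that is, $\Upsilon+\textnormal{Id}$ is injective. Equivalently, one could endow $L^1[0,b]$ with the weighted norm $\|x\|_{\mu}=\int_0^b |x(t)|\,e^{-\mu t}\,dt$ and check, by the same Fubini computation, that $\|\Upsilon x\|_{\mu}\le\bigl(\int_0^b |k(u)|\,e^{-\mu u}\,du\bigr)\|x\|_{\mu}$ with the factor smaller than $1$ for $\mu$ large, so that $\Upsilon+\textnormal{Id}$ is boundedly invertible in an equivalent norm.

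The only delicate point is that the kernel $k$ need not be bounded near $0$: if some $\alpha_j<1$ it is genuinely singular, so a direct Gronwall estimate is unavailable. This is circumvented by working throughout in $L^1$ and using only that $u\mapsto\int_0^u|k|$ is continuous and vanishes at $0$, which is exactly the integrability of $k$ established above. It is worth noting that the localization phenomenon used here --- the vanishing of $x$ propagating outward from the origin --- is precisely what Titchmarsh's Theorem \ref{Titchmarsh} and Remark \ref{RTi} capture in greater generality; alternatively one may invoke them to deduce that the Volterra convolution operator $C_0(k)$ has spectral radius zero, so that $-1$ is not one of its eigenvalues, yielding the injectivity of $\Upsilon+\textnormal{Id}$ at once.
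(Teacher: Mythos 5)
Your main argument is correct, but it takes a genuinely different route from the paper. The paper's proof is a two-line reduction to Titchmarsh's Theorem \ref{Titchmarsh}: since $\Upsilon+\textnormal{Id}$ is not itself a convolution (because of the $\textnormal{Id}$ term), it composes with $I_{0^+}^1$ to obtain the convolution operator with kernel $1+\sum_j c_j t^{\alpha_j}/\Gamma(\alpha_j+1)$, which does not vanish near $0$, and then invokes Remark \ref{RTi}. You instead give a self-contained localization argument: writing $\Upsilon=C_0(k)$ with $k\in L^1[0,b]$, you let the vanishing of a solution of the homogeneous equation propagate outward from $0$ in steps of length $\delta$ chosen so that $\int_0^{\delta}\lvert k\rvert<1$, using only Fubini and the absolute continuity of $u\mapsto\int_0^u\lvert k\rvert$. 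The step estimate $\|x\|_{L^1[\lambda,\lambda+\delta]}\le\bigl(\int_0^{\delta}\lvert k\rvert\bigr)\|x\|_{L^1[\lambda,\lambda+\delta]}$ is correct, and your handling of the possible singularity of $k$ at the origin is exactly right. What each approach buys: the paper's is shorter but rests on a deep theorem whose proof it omits; yours is elementary, and your Bielecki-norm variant ($\|x\|_{\mu}=\int_0^b\lvert x\rvert e^{-\mu t}\,dt$ with $\mu$ large) actually proves more than injectivity --- it shows $\Upsilon+\textnormal{Id}$ is boundedly invertible, which would subsume the paper's separate surjectivity argument via Rust's Theorem \ref{Rust} and its appeal to the Bounded Inverse Theorem.

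One caveat on your closing aside: Titchmarsh's Theorem does not by itself yield that $C_0(k)$ has spectral radius zero; it only rules out $0$ as an eigenvalue of convolution operators whose kernels do not vanish near the origin. The spectral-radius-zero fact is true, but it is proved by estimating the iterated kernels (or equivalently by your weighted-norm computation), not by Titchmarsh. Since this is offered only as an optional alternative, it does not affect the validity of your proof.
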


\begin{proof} Note that we can not apply Theorem \ref{Titchmarsh} directly to $\Upsilon+\textnormal{Id}$, since it is not a convolution operator due to the ``Id'' term. However, $I_{0^+}^1 \circ \left(T+\textnormal{Id}\right)$ is a convolution operator and we conclude, following Remark \ref{RTi}, that $I_{0^+}^1 \circ \left(\Upsilon+\textnormal{Id}\right)$ is injective. If the previous composition is injective the right factor $\left(\Upsilon+\textnormal{Id}\right)$ has to be injective.
\end{proof}

\subsubsection{$\Upsilon+\textnormal{Id}$ is surjective}

In this case, we will use the following result, concerning Volterra integral equations of the second kind. This result essentially states that some family of integral equations do always have a continuous solution, provided that the source term is continuous.

\begin{theorem}[Rust] \label{Rust} Given $k \in L^1[0,b]$, the Volterra integral equation $$\left(C_0(k)\,v\right)(t)+v(t):=\int_0^t k(t-s) \cdot  v(s) \, ds + v(t) = w(t)$$ has exactly one continuous solution $v \in \mathcal{C}[0,b]$, provided that $w \in \mathcal{C}[0,b]$ and that the following two conditions hold:
\begin{itemize}
\item If $h \in \mathcal{C}[0,b]$, then $C_a(k)\,h \in \mathcal{C}[0,b]$.
\item If $n \in \mathbb{Z}^+$ is big enough, then $\left(C_0(k)\right)^n = C_0(\widetilde{k})$ for some $\widetilde{k} \in \mathcal{C}[0,b]$, 
\end{itemize}
\end{theorem}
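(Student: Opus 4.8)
The plan is to establish uniqueness and existence separately: uniqueness via Titchmarsh's theorem, exactly as in the preceding corollary, and existence by summing a Neumann series whose convergence is squeezed out of the two hypotheses.

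For uniqueness, suppose $v_1,v_2\in\mathcal{C}[0,b]$ both solve the equation, so that $v:=v_1-v_2$ satisfies $\left(C_0(k)+\textnormal{Id}\right)v\equiv 0$. Composing with $I_{0^+}^1$ and using associativity of $*_0$ together with $I_{0^+}^1 h=1*_0 h$, the operator $I_{0^+}^1\circ\left(C_0(k)+\textnormal{Id}\right)$ is the convolution operator $C_0(j)$ with kernel $j(t)=1+\int_0^t k(s)\,ds$, which is continuous and equals $1$ at $t=0$, hence non-null on some interval $[0,\lambda]$. By Remark~\ref{RTi}, $C_0(j)$ is injective, so $v\equiv 0$, and the solution is unique whenever it exists.

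For existence, I would take $v:=\sum_{m=0}^\infty (-1)^m C_0(k)^m w$ and prove the series converges in $\left(\mathcal{C}[0,b],\|\cdot\|_\infty\right)$. Each term lies in $\mathcal{C}[0,b]$ by the first hypothesis applied inductively, and $C_0(k)$ is bounded on $\mathcal{C}[0,b]$ with $\|C_0(k)\|\le\|k\|_1$. The obstruction is that $\|k\|_1$ need not be small, so the bare estimate $\|C_0(k)^m w\|_\infty\le\|k\|_1^m\|w\|_\infty$ is useless; this is precisely where the second hypothesis is used. Fix $N$ with $C_0(k)^N=C_0(\widetilde k)$, $\widetilde k\in\mathcal{C}[0,b]$. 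The classical Volterra estimate for continuous kernels gives $\|C_0(\widetilde k)^q w\|_\infty\le\|w\|_\infty\,\|\widetilde k\|_\infty^{\,q}\,b^{\,q}/q!$, obtained by iterating $|C_0(\widetilde k)^q w(t)|\le\|\widetilde k\|_\infty^{\,q}\|w\|_\infty\, t^{\,q}/q!$, the factor $t^{\,q}/q!$ arising from the $q$-fold integral of the constant $1$. Writing $m=Nq+r$ with $0\le r<N$ and bounding $\|C_0(k)^r\|\le\|k\|_1^{\,r}\le\left(1+\|k\|_1\right)^{N}$, every block of $N$ consecutive terms is dominated by $C\,(\|\widetilde k\|_\infty b)^{\,q}/q!$, so the series converges absolutely and uniformly and $v\in\mathcal{C}[0,b]$.

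Finally, since $C_0(k)$ is continuous on $\mathcal{C}[0,b]$ and the partial sums converge uniformly, I can apply $C_0(k)+\textnormal{Id}$ term by term; the resulting sum telescopes to $w$, so $v$ is a solution, and combined with the uniqueness step this yields exactly one continuous solution. The main difficulty is the convergence of the Neumann series under only the $L^1$ assumption on $k$: it is the eventual continuity, hence boundedness, of the iterated kernel $k^{*N}$ that restores the factorial decay required for summability, and without the second hypothesis the argument breaks down when $\|k\|_1\ge 1$.
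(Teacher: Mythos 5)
The paper does not prove this theorem at all: it states the result, remarks that the proof is omitted ``analogously to what we previously did with Titchmarsh Theorem,'' and refers the reader to Rust's 1934 article, of which the statement here is a convolution-kernel special case. Your argument is therefore not comparable to anything in the paper, but it is correct and self-contained, and it essentially reconstructs the classical successive-approximation proof underlying the cited result. The uniqueness half is exactly the paper's own injectivity argument for $\Upsilon+\textnormal{Id}$: composing with $I_{0^+}^1$ turns $C_0(k)+\textnormal{Id}$ into a genuine convolution operator with kernel $j(t)=1+\int_0^t k(s)\,ds$, which is continuous with $j(0)=1$, so Remark~\ref{RTi} applies. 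The existence half correctly isolates the role of the second hypothesis: the bare bound $\|C_0(k)^m w\|_\infty\le\|k\|_1^m\|w\|_\infty$ does not give summability, and it is only after some iterate $C_0(k)^N=C_0(\widetilde k)$ has a bounded kernel that the factorial decay $\|C_0(\widetilde k)^q w\|_\infty\le\|w\|_\infty\|\widetilde k\|_\infty^q b^q/q!$ kicks in; splitting $m=Nq+r$ and absorbing the finitely many leftover factors into a constant then gives uniform absolute convergence of the Neumann series, and the telescoping step is justified by the boundedness of $C_0(k)$ on $\left(\mathcal{C}[0,b],\|\cdot\|_\infty\right)$. The only cosmetic imprecisions are that ``every block of $N$ consecutive terms is dominated by $C(\|\widetilde k\|_\infty b)^q/q!$'' should carry an extra factor $N$, and that the first hypothesis is what guarantees each partial sum actually lies in $\mathcal{C}[0,b]$ before you pass to the uniform limit; neither affects the validity of the argument.
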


We will not provide the proof, analogously to what we previously did with Titchmarsh Theorem. The result can be found in a more general context in \cite{Rust}, since here we have stated it for the particular case of convolution kernels.

\begin{remark} \label{Rru} We know that the image of $\Upsilon+\textnormal{Id}$ will lie in $\mathcal{C}[0,b],$ since fractional integrals map continuous functions into continuous functions (Proposition \ref{propo}) and $\Upsilon^n$ will be defined by a continuous kernel when $n\geq\alpha_1^{-1}$, which is the inverse of the least integral order in $\Upsilon$.
\end{remark}

We need to conclude that, indeed, the image of $\Upsilon+\textnormal{Id}$ is $L^1[0,b]$.

\begin{corollary} The operator $\Upsilon+\textnormal{Id}$ described in (\ref{fundamental2}) is surjective.
\end{corollary}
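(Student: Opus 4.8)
The plan is to reduce the surjectivity statement on $L^1[0,b]$ to the corresponding statement on $\mathcal{C}[0,b]$, where Rust's Theorem \ref{Rust} applies verbatim, by lifting the source term with the operator $I_{0^+}^1$ and then verifying that the continuous solution produced by Rust is regular enough to be brought back down to $L^1[0,b]$.

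\textbf{Key steps.} Fix an arbitrary $f\in L^1[0,b]$. Since $I_{0^+}^1 f$ is absolutely continuous, hence continuous (Proposition \ref{propo} and Theorem \ref{absolute}), and $\Upsilon+\textnormal{Id}$ satisfies the hypotheses of Theorem \ref{Rust} by Remark \ref{Rru}, there is a (unique) $v\in\mathcal{C}[0,b]$ with $(\Upsilon+\textnormal{Id})v=I_{0^+}^1 f$, that is, $v=I_{0^+}^1 f-\Upsilon v$. The goal then becomes to prove that $v$ actually lies in $I_{0^+}^1 L^1[0,b]$. Indeed, once $v=I_{0^+}^1 x$ with $x\in L^1[0,b]$, commuting $\Upsilon$ past $I_{0^+}^1$ (Index Law, Proposition \ref{propo}) turns the identity into $I_{0^+}^1\bigl((\Upsilon+\textnormal{Id})x\bigr)=I_{0^+}^1 f$, and the injectivity of $I_{0^+}^1$ (Proposition \ref{propo}) gives $(\Upsilon+\textnormal{Id})x=f$, which is what we want.

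\textbf{Main obstacle.} The step I expect to be delicate is precisely showing $v\in I_{0^+}^1 L^1[0,b]$: the relation $v=I_{0^+}^1 f-\Upsilon v$ does not settle this directly, since a single application of $\Upsilon$ to the merely continuous function $v$ need not produce an absolutely continuous function (low-order fractional integration does not map $\mathcal{C}[0,b]$ into $AC[0,b]$). To circumvent this I would iterate the fixed-point relation $m$ times, obtaining
\[
v=\sum_{k=0}^{m-1}(-1)^k\,\Upsilon^k I_{0^+}^1 f\;+\;(-1)^m\,\Upsilon^m v,
\]
and choose $m$ large enough that $m\alpha_1\geq 1$ (the same threshold appearing in Remark \ref{Rru}). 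Then the finite sum equals $I_{0^+}^1$ applied to the $L^1$ function $\sum_{k=0}^{m-1}(-1)^k\Upsilon^k f$, because $\Upsilon$ commutes with $I_{0^+}^1$, so it belongs to $I_{0^+}^1 L^1[0,b]$; and expanding $\Upsilon^m$ via the Index Law exhibits it as a linear combination of operators $I_{0^+}^{\sigma}$ with $\sigma\geq m\alpha_1\geq 1$, whence $\Upsilon^m v\in I_{0^+}^1 L^1[0,b]$ as well. Therefore $v\in I_{0^+}^1 L^1[0,b]$, and the reduction above concludes.

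\textbf{Remark on an alternative.} One could instead bypass Rust's theorem altogether, observing that $\Upsilon$ is a Volterra convolution operator with $L^1$ kernel and is therefore quasinilpotent on $L^1[0,b]$, so that $\Upsilon+\textnormal{Id}$ is boundedly invertible through a Neumann series; but the argument above has the advantage of staying within the tools already developed in this section.
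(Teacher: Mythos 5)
Your argument is correct, and it reaches the conclusion by a genuinely different reduction than the one in the paper. The paper keeps the equation on $L^1[0,b]$ and instead modifies the unknown: writing $x = y + \bigl(\textnormal{Id}-\Upsilon+\cdots+(-1)^n\Upsilon^n\bigr)f$, the equation for $y$ has source term $(-1)^{n+1}\Upsilon^{n+1}f$, which is continuous once $n\geq \alpha_1^{-1}$ because $\Upsilon^{n+1}$ is then a convolution with continuous kernel; Rust's theorem is applied directly to that shifted problem and the solution of the original equation is recovered by adding back the partial Neumann series, which already lies in $L^1[0,b]$. You instead regularize the \emph{source} by applying $I_{0^+}^1$, solve the lifted equation by Rust, and then must descend, which forces the extra step of proving that the continuous Rust solution $v$ actually lies in $I_{0^+}^1 L^1[0,b]$. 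Your handling of that step is sound: the iterated fixed-point identity splits $v$ into a partial sum that is manifestly $I_{0^+}^1$ of an $L^1$ function (by the Index Law) and a remainder $\Upsilon^m v$ whose every term $I_{0^+}^{\sigma}v$ with $\sigma\geq m\alpha_1\geq 1$ factors through $I_{0^+}^1$; injectivity of $I_{0^+}^1$ then finishes. The two proofs exploit the same smoothing mechanism (high powers of $\Upsilon$ gain a full order of integration) and both rest on Rust's theorem, but the paper's version is slightly leaner because it never needs the descent lemma. Your closing remark is also accurate: quasinilpotence of the Volterra convolution operator $\Upsilon$ on $L^1[0,b]$ gives bounded invertibility of $\Upsilon+\textnormal{Id}$ by a convergent Neumann series, which would subsume both injectivity and surjectivity at once, at the cost of invoking a tool the paper chose not to develop.
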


\begin{proof} Consider $f \in L^1[0,b]$ and the equation \[\left(\Upsilon+\textnormal{Id}\right)x(t)=f(t).\] Observe that $x$ solves the previous equation if and only if it solves \[\left(\Upsilon+\textnormal{Id}\right)(x(t)-f(t))=-\Upsilon\,f(t),\] but now the source term is in $I_{0^+}^{\alpha_1}L^1[0,b]$. If we repeat this idea inductively, we see that $x$ solves the original equation if and only if \[\left(\Upsilon+\textnormal{Id}\right)(x(t)-(\textnormal{Id}-\Upsilon+\cdots+(-1)^n \Upsilon)f(t))=(-1)^{n+1}\Upsilon^{n+1}\,f(t).\] The right hand side will be continuous for $n \geq \alpha_1^{-1}$ and, by Remark \ref{Rru}, it will have a solution.
\end{proof}

\subsubsection{$\Upsilon+\textnormal{Id}$ is a bounded automorphism in $L^1[0,b]$}

We have already seen that $\Upsilon+\textnormal{Id}$ is bounded and bijective, and hence the inverse is also bounded due to the Bounded Inverse Theorem for Banach spaces. Therefore, we have the following result.

\begin{theorem} The operator $T+\textnormal{Id}$, described in (\ref{fundamental2}) is an invertible bounded linear map from the Banach space $L^1[0,b]$ to itself, whose inverse is also bounded.
\end{theorem}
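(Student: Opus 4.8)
The plan is to assemble the final statement directly from the three structural facts already established in the preceding subsubsections, so that the proof is essentially a bookkeeping argument invoking a classical theorem from Banach space theory. The operator in question, $\Upsilon + \textnormal{Id}$, has just been shown to be (i) bounded, (ii) injective, and (iii) surjective as a map $L^1[0,b] \longrightarrow L^1[0,b]$. Since $L^1[0,b]$ is a Banach space, a bounded bijective linear map between Banach spaces automatically has bounded inverse; this is the content of the Bounded Inverse Theorem (equivalently, a corollary of the Open Mapping Theorem). So the body of the proof is just: recall boundedness, recall bijectivity (injectivity plus surjectivity), then cite the Bounded Inverse Theorem to conclude that $(\Upsilon + \textnormal{Id})^{-1}$ exists and is bounded.

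Concretely, I would proceed in the following order. First I would note that boundedness of $\Upsilon + \textnormal{Id}$ was verified in the subsubsection ``$\Upsilon+\textnormal{Id}$ is bounded'', using that each $I_{0^+}^{\alpha_j}$ is a bounded operator on $L^1[0,b]$ (Proposition \ref{propo}) and that a finite linear combination of bounded operators, together with the identity, is bounded. Second, I would invoke the two corollaries just proved: the injectivity corollary (via the Titchmarsh-theoretic Remark \ref{RTi}, applied after composing with $I_{0^+}^1$) and the surjectivity corollary (via the inductive reduction to a continuous source term and an application of Rust's Theorem \ref{Rust}). Together these give that $\Upsilon + \textnormal{Id}$ is a linear bijection. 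Third, I would apply the Bounded Inverse Theorem: a continuous linear bijection between Banach spaces is a topological isomorphism, so its inverse is again a bounded linear operator on $L^1[0,b]$. That closes the argument.

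There is essentially no obstacle here, which is appropriate since this theorem is a summary statement collecting the work of the previous pages; the genuinely nontrivial content lay in establishing injectivity and surjectivity (which in turn rested on Titchmarsh's theorem and Rust's theorem, both imported from the literature). The only point requiring a modicum of care is that the Bounded Inverse Theorem requires \emph{completeness} of both the domain and the codomain, and here both are the same space $L^1[0,b]$, which is complete — so the hypothesis is met. One small presentational caveat: the statement as displayed writes ``$T+\textnormal{Id}$'' whereas the operator was named $\Upsilon + \textnormal{Id}$ in \eqref{fundamental2}; in the proof I would simply treat $T$ and $\Upsilon$ as the same operator (the notational slip is harmless) and phrase everything in terms of $\Upsilon + \textnormal{Id}$ to stay consistent with the equation reference.

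\begin{proof}
We have already shown, in the three preceding paragraphs, that $\Upsilon + \textnormal{Id} : L^1[0,b] \longrightarrow L^1[0,b]$ is bounded, injective and surjective; hence it is a bounded linear bijection. Since $L^1[0,b]$ is a Banach space, the Bounded Inverse Theorem (a consequence of the Open Mapping Theorem) guarantees that the inverse map $(\Upsilon + \textnormal{Id})^{-1}$ is itself a bounded linear operator from $L^1[0,b]$ to itself. Therefore $\Upsilon + \textnormal{Id}$ is an invertible bounded linear map with bounded inverse, as claimed.
\end{proof}
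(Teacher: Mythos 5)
Your proof is correct and matches the paper's own argument exactly: the paper likewise observes that $\Upsilon+\textnormal{Id}$ has already been shown to be bounded and bijective, and then invokes the Bounded Inverse Theorem for Banach spaces to conclude that the inverse is bounded. Your note that the ``$T$'' in the theorem statement is a notational slip for $\Upsilon$ is also accurate.
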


In particular, we get the following corollary

\begin{theorem} \label{uncidad} Given $f \in L^1[0,b]$, the equation
\begin{align}
\left(\Upsilon+\textnormal{Id}\right)x(t)=f(t) \tag{\ref{fundamental2}}
\end{align}
has exactly one solution $x \in L^1[0,b]$.
\end{theorem}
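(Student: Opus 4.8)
The plan is to obtain Theorem \ref{uncidad} as an immediate consequence of the structural results already established for the operator $\Upsilon+\textnormal{Id}$. Since the previous subsections have shown that $\Upsilon+\textnormal{Id}$ is a bounded, injective, and surjective linear operator from $L^1[0,b]$ to itself, and the Bounded Inverse Theorem (a standard consequence of the Open Mapping Theorem for Banach spaces) guarantees that its inverse is also bounded, the operator is a bounded automorphism of $L^1[0,b]$. Bijectivity alone already yields the statement: for any $f \in L^1[0,b]$ there exists a unique $x \in L^1[0,b]$ with $(\Upsilon+\textnormal{Id})x = f$, namely $x = (\Upsilon+\textnormal{Id})^{-1}f$.

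Concretely, I would argue as follows. First I would invoke surjectivity to produce at least one solution $x \in L^1[0,b]$ of Equation (\ref{fundamental2}). Then I would invoke injectivity to rule out a second solution: if $x_1$ and $x_2$ both solve the equation, then $(\Upsilon+\textnormal{Id})(x_1-x_2)=0$, and injectivity forces $x_1=x_2$. This is really a one-line deduction, so the proof is essentially a pointer to the preceding three corollaries packaged together with the Bounded Inverse Theorem.

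There is no genuine obstacle here; the substance of the work has already been carried out in the earlier subsections (the injectivity argument via Titchmarsh's Theorem applied after composing with $I_{0^+}^1$, and the surjectivity argument via the inductive reduction to a continuous source term followed by Rust's Theorem). The only point that requires a word of care is that boundedness of the inverse is not needed for the mere existence-and-uniqueness assertion of Theorem \ref{uncidad} — it is an extra piece of information, worth recording for later stability-type statements but logically superfluous for the claim at hand. I would therefore keep the proof to a couple of sentences, explicitly citing the injectivity and surjectivity corollaries, and perhaps remark that continuity of the inverse additionally gives continuous dependence of $x$ on $f$.
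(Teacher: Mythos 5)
Your proposal is correct and follows exactly the route the paper takes: the theorem is stated as an immediate corollary of the preceding results that $\Upsilon+\textnormal{Id}$ is bounded, injective, and surjective, with surjectivity giving existence and injectivity giving uniqueness. Your added observation that the Bounded Inverse Theorem is logically superfluous for the bare existence-and-uniqueness claim is accurate and a fair remark.
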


Although it is not the scope of this paper, we highlight that such an equation can be solved using classical techniques for integral equations or specifical tools for the particular case of fractional integral equations, like the one exposed in \cite{CaRo}.

\section{Implications of fractional integral equations in fractional differential equations}

It would be nice to inherit some of the previous results for fractional differential equations. In fact, we are interested in studying the solutions of this general linear problem with constant coefficients
\begin{equation} \label{previodiff}
L \, u(t):=\left(c_1 \, D_{0^+}^{\beta_1}+\dots+ c_{n-1}\,D_{0^+}^{\beta_{n-1}}+D_{0^+}^{\beta_n}\right) u(t) = w(t),
\end{equation}
where $\beta_n>\dots>\beta_1\geq 0$ and $w \in L^1[0,b]$. Of course, the first question is where should we look for the solution. It is very relevant to clarify completely this point, since there are classical references, for instance see \cite{MiRo} (Theorem 1, Section 5.5), that state the following theorem or equivalent versions.

\begin{theorem} \label{incompl} Consider a linear homogeneous fractional differential equation (for Riemann-Liouville derivatives) with constant coefficients and rational orders. If the highest order of differentiation is $\alpha$, then the equation has $\lceil \alpha \rceil$ linearly independent solutions.
\end{theorem}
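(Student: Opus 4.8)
The plan is to reproduce the classical Laplace-transform argument of \cite{MiRo}, exploiting that all orders being rational reduces the characteristic function to an algebraic expression in a single fractional power. First I would recall the transform rule for the Riemann-Liouville derivative,
$$\mathcal{L}\left[D_{0^+}^{\beta} u\right](s) = s^{\beta} U(s) - \sum_{k=0}^{\lceil\beta\rceil-1} s^{k}\left(D_{0^+}^{\beta-1-k} u\right)(0),$$
valid for $u$ in a suitable subspace of $\mathcal{X}_{\beta}$ of admissible growth, with $U=\mathcal{L}[u]$. Applying this term by term to $Lu=0$ and collecting the transform of $u$ against the characteristic function $P(s):=s^{\beta_n}+\sum_{j=1}^{n-1} c_j s^{\beta_j}$, I arrive at
$$P(s)\,U(s)=Q(s),\qquad Q(s):=\sum_{k=0}^{\lceil\alpha\rceil-1} A_k\,s^{k},\qquad A_k:=\sum_{j:\,\lceil\beta_j\rceil>k} c_j\,\bigl(D_{0^+}^{\beta_j-1-k}u\bigr)(0).$$
The crucial bookkeeping step is that the highest order $\beta_n=\alpha$ feeds, with coefficient $1$, into every one of $A_0,\dots,A_{\lceil\alpha\rceil-1}$ through the $\lceil\alpha\rceil$ quantities $\bigl(D_{0^+}^{\alpha-1-k}u\bigr)(0)$; since these initial data are free, the vector $(A_0,\dots,A_{\lceil\alpha\rceil-1})$ ranges over all of $\mathbb{R}^{\lceil\alpha\rceil}$, so that $Q$ ranges over every polynomial of degree at most $\lceil\alpha\rceil-1$.

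Second, I would exhibit the candidate solutions as the inverse transforms
$$u_k:=\mathcal{L}^{-1}\!\left[\frac{s^{k}}{P(s)}\right],\qquad k=0,1,\dots,\lceil\alpha\rceil-1.$$
To make sense of this inversion I would use rationality in an essential way: writing every $\beta_j=p_j/q$ with common denominator $q$ and setting $\sigma=s^{1/q}$, the function $s^{k}/P(s)$ becomes a genuine rational function of $\sigma$, whose partial-fraction decomposition inverts explicitly into a finite combination of Mittag-Leffler-type functions of the form $t^{a-1}E_{1/q,a}(\lambda\,t^{1/q})$. This simultaneously guarantees existence of the $u_k$ and provides closed expressions with which $Lu_k=0$ can be checked.

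Third, I would settle linear independence and the solution property. Independence is immediate from the injectivity of the Laplace transform, because $\{s^{k}/P(s)\}_{k=0}^{\lceil\alpha\rceil-1}$ are linearly independent as functions of $s$ (the monomials $s^k$ are independent and $P$ is a common nonzero factor). That each $u_k$ solves the homogeneous equation follows by reading the transform identity backwards: by construction $P(s)\,\mathcal{L}[u_k]=s^{k}$, which is exactly the transform of $Lu_k$ once the initial-value polynomial is taken to be $s^{k}$. Hence the $\lceil\alpha\rceil$ functions $u_0,\dots,u_{\lceil\alpha\rceil-1}$ are linearly independent solutions, as claimed.

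The main obstacle, and precisely the phenomenon that motivates the remainder of the paper, is the tacit regularity assumption buried in the very first step: both the derivative rule and the backward reading of the identity are legitimate only when $u_k$ lies in $\bigcap_{j}\mathcal{X}_{\beta_j}$, the domain on which every $D_{0^+}^{\beta_j}$ is defined. When all orders share one decimal part this holds automatically, but Remark \ref{nocont} warns that it can fail when the decimal parts differ, so that some formally constructed $u_k$ is annihilated by the top derivative yet escapes $\mathcal{X}_{\beta_j}$ for a smaller order. Thus $\lceil\alpha\rceil$ is exactly the formal, sequential count obtained by ignoring this domain question; making the regularity requirement explicit is what will later lower the genuine count to the $\lceil\alpha-\beta_*\rceil$ dictated by Lemma \ref{estructura}.
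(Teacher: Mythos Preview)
The paper does not supply a proof of Theorem~\ref{incompl}: it is quoted verbatim as a classical statement from \cite{MiRo} and then immediately criticised, the whole purpose of the surrounding section being to argue that it holds only in a ``weak'' sense once one is honest about the domain $\bigcap_j \mathcal{X}_{\beta_j}$. So there is no ``paper's own proof'' to compare against; what you have written is essentially a reconstruction of the Miller--Ross Laplace-transform argument that the paper is quoting.

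Your final paragraph is exactly right and is, in fact, the paper's thesis: the step where you pass from $P(s)\,\mathcal{L}[u_k]=s^k$ back to $L u_k=0$ silently assumes $u_k\in\bigcap_j\mathcal{X}_{\beta_j}$, and when the $\beta_j$ do not all share the same fractional part this can fail. The paper formalises precisely this by distinguishing weak solutions (those of \eqref{weaksol}, an $\lceil\beta_n\rceil$-dimensional affine space) from strong solutions (those actually lying in $\bigcap_j\mathcal{X}_{\beta_j}$, an $\lceil\beta_n-\beta_*\rceil$-dimensional space by Lemma~\ref{estructura}). Thus your sketch is a faithful account of the classical result and of why the paper regards it as incomplete; just be aware that, within this paper, Theorem~\ref{incompl} is an object of critique rather than a result to be proved.
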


It is important to note that many references for are not clear enough about the notion of solution to a fractional differential equation. With the previous sentence, we mean that it is desirable to introduce a suitable space of differentiable functions first, to later discuss about the solvability of the fractional differential equation. We devote the rest of the paper to show that the previous theorem is true only in some weak sense. Indeed, after defining formally the notion of ``strong solution'', we will see that, in general, there are less than $\lceil \alpha \rceil$ linearly independent solutions. Indeed, only for those ``strong'' solutions it will be coherent to talk about initial values.

If we go back to \eqref{previodiff}, we can make the following vital remark.

\begin{remark} We recall that, in the usual case of integer orders, we look for the solutions in $\mathcal{X}_{\beta_n}$. Although it is quite common to forget it, the underlying reason to do this is that $\bigcap_{j=1}^n \mathcal{X}_{\beta_j}=\mathcal{X}_{\beta_n}$ when every $\beta_j$ is a non-negative integer. This means that any function with summable derivative of order $\beta_n$ has summable derivative of any fewer order too. However, in general, this does not necessarily happen when the involved orders are non-integers. Thus, we may have $\bigcap_{j=1}^n \mathcal{X}_{\beta_j} \neq \mathcal{X}_{\beta_n}$ and, of course, a solution to Equation (\ref{previodiff}) has to lie in $\bigcap_{j=1}^n \mathcal{X}_{\beta_j}$.
\end{remark}

Consequently, it is convenient to know the structure of the set $\bigcap_{j=1}^n \mathcal{X}_{\beta_j}$, which has already been described in Lemma \ref{estructura}, to study existence and uniqueness of solution. Of course, to expect uniqueness of solution, some initial conditions have to be added to Equation (\ref{previodiff}), but this will be detailed in the next section. The fundamental remark is that Equation (\ref{previodiff}) can be rewritten as
\begin{equation} \label{previodiff2}
L \, u(t):=D_{0^+}^{\beta_n} \, \left(c_1 I_{0^+}^{\beta_n-\beta_1}+\dots+c_{n-1} I_{0^+}^{\beta_n-\beta_{n-1}}+\textnormal{Id}\right) u(t) = w(t).
\end{equation}
In consequence, it is quite natural to make the following reflection. If $u(t)$ solves (\ref{previodiff2}), it is because
\begin{equation} \label{weaksol}
\left(c_1 I_{0^+}^{\beta_n-\beta_1}+\dots+c_{n-1} I_{0^+}^{\beta_n-\beta_{n-1}}+ \textnormal{Id}\right) u(t) \in I_{0^+}^{\beta_n} w(t) + \ker D_{0^+}^{\beta_n}.
\end{equation}
We will refer to the set of solutions to Equation (\ref{weaksol}), as the set of weak solutions. The previous terminology obeys the following reason: although a solution to (\ref{previodiff2}) solves (\ref{weaksol}), the converse does not hold in general. The point is that a solution of (\ref{weaksol}) may not lie in $\bigcap_{j=1}^n \mathcal{X}_{\beta_j}$. Of course, if the weak solution lies in $\bigcap_{j=1}^n \mathcal{X}_{\beta_j}$, then it solves (\ref{previodiff2}). The set of solutions to (\ref{previodiff2}) will be called set of strong solutions.

At this moment, we know two vital things:

\begin{itemize}
\item We have already described $\ker D_{0^+}^{\beta_n}=\left \langle \left\{ t^{\beta_n-1} , \dots, t^{\beta_n-\lceil \beta_n \rceil} \right\}\right \rangle$, that is a vector space of dimension $\lceil \beta_n \rceil$. Therefore, the set of weak solutions has dimension $\lceil \beta_n \rceil$ too, since it is the image of the affine space $I_{0^+}^{\beta_n} w(t) + \ker D_{0^+}^{\beta_n}$ via the automorphism $T^{-1} \in \textnormal{Aut}_B(L^1[0,b])$.
\item The dimension of the set of strong solutions is bounded from above by $\lceil \beta_n - \beta_*\rceil$. If the dimension were higher we could find two different solutions to (\ref{previodiff2}) whose difference would lie in $I_{0^+}^{\beta_n}L^1[0,b]$, due to Remark \ref{interse}. After writing their difference as $I_{0^+}^{\beta_n} g$ with $g \neq 0$, it would trivially fulfil
\begin{equation*}
\left(c_1 I_{0^+}^{\beta_n-\beta_1}+\dots+c_{n-1} I_{0^+}^{\beta_n-\beta_{n-1}}+\textnormal{Id}\right)g(t) = 0,
\end{equation*}
which is not possible since the linear operator in the left hand side is injective.
\end{itemize}

From these remarks, there are some remaining points that need to be studied in detail. First, we prove that the bound $\lceil \beta_n - \beta_*\rceil$ is sharp by inspecting which of elements in $\ker D_{0^+}^{\beta_n}$ guarantee that the weak solution associated to those elements is, indeed, a strong one.

\begin{remark} \label{remarklenders} We have $\bigcap_{j=1}^n \mathcal{X}_{\beta_j} \subset I_{0^+}^{\beta_n-\lceil \beta_n - \beta_* \rceil+1-\varepsilon} L^1[0,b]$ for every increment $\varepsilon>0$, but not for $\varepsilon=0$. Moreover, note that if $f \in \ker D_{0^+}^{\beta_n}$ is chosen as the right addend in the right hand side in (\ref{weaksol}), we have that, for $\gamma \leq \beta_n$, $f \in I_{0^+}^{\gamma}L^1[0,b]$ if and only if $u \in I_{0^+}^{\gamma}L^1[0,b]$. Therefore, to have a strong solution, it is mandatory to select $f \in \left \langle \left\{t^{\beta_n-1} , \dots, t^{\beta_n-\lceil \beta_n \rceil} \right\} \right \rangle$.
\end{remark}

\begin{lemma}
If $u \in L^1[0,b]$ solves
\begin{equation}\label{cansado}
\left(c_1 I_{0^+}^{\beta_n-\beta_1}+\dots+c_{n-1} I_{0^+}^{\beta_n-\beta_{n-1}}+ \textnormal{Id}\right) u(t) = I_{0^+}^{\beta_n} w(t) + f(t)
\end{equation}
for $f \in \left \langle \left\{t^{\beta_n-1} , \dots, t^{\beta_n-\lceil \beta_n-\beta_* \rceil} \right\} \right \rangle \subset \ker D_{0^+}^{\beta_n}$, then $u \in \bigcap_{j=1}^n \mathcal{X}_{\beta_j}$.
\end{lemma}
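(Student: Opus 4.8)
Write $\Upsilon:=c_1 I_{0^+}^{\beta_n-\beta_1}+\dots+c_{n-1} I_{0^+}^{\beta_n-\beta_{n-1}}$ and $P:=I_{0^+}^{\beta_n}w+f$, so that (\ref{cansado}) reads $(\Upsilon+\textnormal{Id})\,u=P$. By Lemma \ref{estructura}, $\bigcap_{j=1}^n\mathcal{X}_{\beta_j}$ is the space
\[
V:=\left\langle\left\{t^{\beta_n-\lceil\beta_n-\beta_*\rceil},\dots,t^{\beta_n-1}\right\}\right\rangle\oplus I_{0^+}^{\beta_n}L^1[0,b],
\]
so the goal is precisely $u\in V$. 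The plan is to express $u$ by means of a Neumann-type expansion and then to check a single structural fact: that $\Upsilon$ maps $V$ into itself.

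For the expansion, I would note that the smallest integration order appearing in $\Upsilon$ is $\sigma:=\beta_n-\beta_{n-1}>0$, so that $\Upsilon(L^1[0,b])\subseteq I_{0^+}^{\sigma}L^1[0,b]$ and, by the Index Law, $\Upsilon^{N}(L^1[0,b])\subseteq I_{0^+}^{N\sigma}L^1[0,b]$ for every $N$. Substituting $u=P-\Upsilon u$ into itself repeatedly gives, for each $N\in\mathbb{Z}^+$,
\[
u=\sum_{i=0}^{N-1}(-1)^i\,\Upsilon^i P+(-1)^N\,\Upsilon^N u .
\]
Choosing $N$ with $N\sigma\ge\beta_n$ puts the tail $\Upsilon^N u$ in $I_{0^+}^{\beta_n}L^1[0,b]\subseteq V$, so it suffices to show $\Upsilon^i P\in V$ for $0\le i\le N-1$. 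Now $I_{0^+}^{\beta_n}w\in I_{0^+}^{\beta_n}L^1[0,b]\subseteq V$ and, by hypothesis, $f$ lies in the finite-dimensional summand of $V$; hence $P\in V$ and the whole matter reduces to the claim $\Upsilon(V)\subseteq V$.

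To establish $\Upsilon(V)\subseteq V$, observe first that $\Upsilon(I_{0^+}^{\beta_n}L^1[0,b])\subseteq I_{0^+}^{\beta_n+\sigma}L^1[0,b]\subseteq I_{0^+}^{\beta_n}L^1[0,b]$, so only the monomials $t^{\beta_n-k}$ with $k\in\{1,\dots,\lceil\beta_n-\beta_*\rceil\}$ need attention. By Remark \ref{Example}, $\Upsilon(t^{\beta_n-k})$ is a linear combination of the monomials $t^{\beta_n+(\beta_n-\beta_i)-k}$ for $i\in\{1,\dots,n-1\}$, and I would then inspect each exponent by splitting on the nature of the gap $\beta_n-\beta_i$, exactly as in the proof of Lemma \ref{estructura}. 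If $\beta_n-\beta_i\notin\mathbb{Z}$, then the definition of $\beta_*$ forces $\beta_i\le\beta_*$, so $(\beta_n-\beta_i)-k\ge(\beta_n-\beta_*)-\lceil\beta_n-\beta_*\rceil>-1$ and the monomial lies in $I_{0^+}^{\beta_n}L^1[0,b]$ by Remark \ref{Example}. If instead $\beta_n-\beta_i=m_i\in\mathbb{Z}^+$, then either $k\le m_i$, so that the exponent is $\ge\beta_n$ and the monomial again lies in $I_{0^+}^{\beta_n}L^1[0,b]$, or $k>m_i$, so that the monomial is $t^{\beta_n-(k-m_i)}$ with $1\le k-m_i\le\lceil\beta_n-\beta_*\rceil$, i.e.\ it lies in $\langle\{t^{\beta_n-\lceil\beta_n-\beta_*\rceil},\dots,t^{\beta_n-1}\}\rangle$. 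In all cases $\Upsilon(t^{\beta_n-k})\in V$; hence $\Upsilon(V)\subseteq V$, and feeding this back into the expansion gives $u\in V=\bigcap_{j=1}^n\mathcal{X}_{\beta_j}$.

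The crux of the argument is the invariance $\Upsilon(V)\subseteq V$. One cannot shortcut it by saying $u=(\Upsilon+\textnormal{Id})^{-1}P$ and invoking that $(\Upsilon+\textnormal{Id})^{-1}$ preserves some classical space such as $AC^{\lceil\beta_n\rceil}[0,b]$: this is false, since $\Upsilon$ can turn a constant into a fractional power $t^{\beta_n-\beta_i}$ with an exponent well below $1$, which is not absolutely continuous. What rescues the argument is the careful tracking of decimal parts — non-integer gaps $\beta_n-\beta_i$ only push exponents past $\beta_n-1$ (this is the precise role played by $\beta_*$), whereas integer gaps keep the produced monomials inside the admissible span. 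This is the same bookkeeping that drives Lemma \ref{estructura}, and making it interlock with the Neumann expansion is the only genuine work; the degenerate case $n=1$ (where $\Upsilon=0$, so $u=P\in I_{0^+}^{\beta_n}L^1[0,b]\oplus\ker D_{0^+}^{\beta_n}=\mathcal{X}_{\beta_n}$) is immediate.
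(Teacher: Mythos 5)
Your proof is correct, and it is organized differently from the paper's. The computational core is the same in both arguments: everything reduces to how $\Upsilon$ acts on the monomials $t^{\beta_n-k}$, $1\leq k\leq\lceil\beta_n-\beta_*\rceil$, with the same case split on integer versus non-integer gaps $\beta_n-\beta_j$. But the paper iterates on the equation itself: it passes to $(\Upsilon+\textnormal{Id})(u-f)=I_{0^+}^{\beta_n}w-\Upsilon f$, decomposes $-\Upsilon f$ as $I_{0^+}^{\beta_n}w_1+f_1$ with $f_1$ in a strictly smaller span, and repeats until the polynomial source term is exhausted (at most $\lceil\beta_n-\beta_*\rceil$ steps); the concluding step, that $(\Upsilon+\textnormal{Id})v\in I_{0^+}^{\beta_n}L^1[0,b]$ forces $v\in I_{0^+}^{\beta_n}L^1[0,b]$, rests on the unique solvability of the integral equation (Theorem \ref{uncidad}) together with the fact that $\Upsilon$ commutes with $I_{0^+}^{\beta_n}$. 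Your finite Neumann identity $u=\sum_{i=0}^{N-1}(-1)^i\Upsilon^iP+(-1)^N\Upsilon^Nu$ with the remainder absorbed into $I_{0^+}^{N\sigma}L^1[0,b]$ sidesteps any appeal to injectivity or solvability of $\Upsilon+\textnormal{Id}$: once the invariance $\Upsilon(V)\subseteq V$ is checked, the conclusion is purely algebraic. This makes the argument more self-contained and isolates cleanly the one structural fact that matters, at the price of a possibly larger iteration count ($N\geq\beta_n/\sigma$ rather than at most $\lceil\beta_n-\beta_*\rceil$). Your exponent estimates are all sound, including the key inequality $(\beta_n-\beta_*)-\lceil\beta_n-\beta_*\rceil>-1$, which holds because $\beta_n-\beta_*$ is not an integer whenever a non-integer gap exists.
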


\begin{proof}
If we use the notation $\Upsilon:=c_1 I_{0^+}^{\beta_n-\beta_1}+\dots+c_{n-1} I_{0^+}^{\beta_n-\beta_{n-1}}$, we deduce from Equation (\ref{cansado}) that $$\left(\Upsilon+ \textnormal{Id}\right) \left(u(t)-f(t)\right) = I_{0^+}^{\beta_n} w(t) - \Upsilon\, f(t).$$ Observe now that the addend $-\Upsilon\, f(t)$ can be decomposed in two parts, since two different situations can happen:
\begin{itemize}
\item If $\beta_n- \beta_j \not \in \mathbb{Z}$, we see that $I_{0^+}^{\beta_n- \beta_j} t^{\beta_n-k}$ will be always in the space $I_{0^+}^{\beta_n+(\beta_n-\beta_*+1)-\lceil \beta_n-\beta_* \rceil-\varepsilon}L^1[0,b]$ for every $\varepsilon>0$. This simply occurs because the worst choice is $\beta_j=\beta_*$ and $k=\lceil \beta_n -\beta_* \rceil$. Indeed, for $\varepsilon$ small enough, the previous space is contained in $I_{0^+}^{\beta_n}L^1[0,b]$, since $\beta_n-\beta_*+1-\lceil \beta_n-\beta_* \rceil$ is strictly positive.
\item If $\beta_n- \beta_j \in \mathbb{Z}^+$, there are two options:
\subitem If $\beta_n- \beta_j>\beta_n- \beta_*$, we have that $I_{0^+}^{\beta_n- \beta_j} t^{\beta_n-k}$ lies again in $I_{0^+}^{\beta_n}L^1[0,b]$, since the maximum value admitted for $k$ is $\lceil \beta_n -\beta_* \rceil$.
\subitem If $\beta_n- \beta_j<\beta_n- \beta_*$, we have that $I_{0^+}^{\beta_n- \beta_j} t^{\beta_n-k} \in \left \langle \left\{t^{\beta_n-k'}\right\} \right \rangle$ for some $k'<k$.
\end{itemize}
Thus, we can write $I_{0^+}^{\beta_n} w(t) - \Upsilon\, f(t)= I_{0^+}^{\beta_n} w_1(t) + f_1(t)$, and arrive to the equation $$\left(c_1 I_{0^+}^{\beta_n-\beta_1}+\dots+c_{n-1} I_{0^+}^{\beta_n-\beta_{n-1}}+ \textnormal{Id}\right) \left(u(t)-f(t)\right) = I_{0^+}^{\beta_n} w_1(t) + f_1(t).$$ Note that $f$ lived in a $\lceil \beta_n-\beta_* \rceil$ dimensional vector space, but $f_1$ lives in a (at most) $\lceil \beta_n-\beta_* \rceil-1$ dimensional vector space.

If we repeat the process, we obtain $$\left(c_1 I_{0^+}^{\beta_n-\beta_1}+\dots+c_{n-1} I_{0^+}^{\beta_n-\beta_{n-1}}+ \textnormal{Id}\right) \left(u(t)-f(t)-f_1(t)\right) = I_{0^+}^{\beta_n} w_2(t) + f_2(t),$$ with $f_2$ lying in a (at most) $\lceil \beta_n-\beta_* \rceil-2$ dimensional vector space. After enough iterations, the vector space has to be zero dimensional and we would have the situation $$\left(\Upsilon+ \textnormal{Id}\right) \left(u(t)-f(t)-\cdots-f_{r-1}(t)\right) = I_{0^+}^{\beta_n} w_r(t) \in I_{0^+}^{\beta_n} L^1[0,b].$$ Therefore, $u(t)-f(t)-\cdots-f_{r-1}(t) \in I_{0^+}^{\beta_n} L^1[0,b]$. Finally, if we use that $$f(t)+f_1(t)+\cdots+f_{r-1}(t) \in \left \langle \left\{t^{\beta_n-1} , \dots, t^{\beta_n-\lceil \beta_n-\beta_* \rceil} \right\} \right \rangle,$$ it follows $u \in \left \langle \left\{t^{\beta_n-1} , \dots, t^{\beta_n-\lceil \beta_n-\beta_* \rceil} \right\} \right \rangle \oplus I_{0^+}^{\beta_n}L^1[0,b]= \bigcap_{j=1}^n \mathcal{X}_{\beta_j}$.
\end{proof}


\section{Smooth solutions for fractional differential equations}

Until this point we have checked that, a priori, there are more weak solutions (a $\lceil \beta_n \rceil$ dimensional space) than strong solutions (a $\lceil \beta_n-\beta_* \rceil$ dimensional space). We have also seen how weak solutions are codified depending on the source term, more concretely depending on the element chosen in $\ker D_{0^+}^{\beta_n}$. Moreover, we know that if the choice is made in a certain subspace of $\ker D_{0^+}^{\beta_n}$, then the obtained solution is a strong one. However, one could think about codifying strong solutions directly in the fractional differential equation via initial conditions, instead of using fractional integral problems and selecting a source term linked to a strong solution. Therefore, the last task should consist in relating the choices for $\ker D_{0^+}^{\beta_n}$ that give a strong solution with the corresponding initial conditions for the strong problem. 

First, to simplify the notation, we reconsider Equation (\ref{previodiff2}) with the additional hypotheses that each positive integer less or equal than $\beta_n-\beta_1$ can be written as $\beta_n-\beta_j$ for some $j$. 

\begin{equation}
D_{0^+}^{\beta_n} \, \left(c_1 I_{0^+}^{\beta_n-\beta_1}+\dots+c_{n-1} I_{0^+}^{\beta_n-\beta_{n-1}}+\textnormal{Id}\right) u(t) = w(t) \tag{\ref{previodiff2}}
\end{equation}

This does not imply a loss of generality, since we can assume that some $c_j=0$, if needed. The only purpose of this assumption is to ease the notation in this proof in the way that is described in the following paragraph. 

If $\beta_*=\beta_{n-m}$, then $\beta_n-\beta_{n-m}$ is the least possible non-integer difference $\beta_n-\beta_{j}$. Thus, we can use the previous notational assumption to check that $\beta_n-\beta_{n-j}=j$ for $j<m$ and $\beta_n-\beta_{n-m} \in (m-1,m)$. Thus, $\lceil \beta_n-\beta_{*} \rceil=\lceil \beta_n-\beta_{n-m} \rceil =m$ and $c_{n-m+1},\dots,c_{n-1}$ are $m-1$ constants multiplying integrals of integer order in (\ref{previodiff2}).

Now, we provide the main result of this section.

\begin{lemma} Under the previous notation, Equation (\ref{previodiff2}) with determined initial values $D_{0^+}^{\beta_n-m} u(0), \dots, D_{0^+}^{\beta_n-1} u(0)$ has a unique solution in $\bigcap_{j=1}^n \mathcal{X}_{\beta_j}$. This solution coincides with the unique solution of (\ref{cansado}), where the source term is the unique function $f \in \left \langle \left\{t^{\beta_n-1} , \dots, t^{\beta_n-m} \right\} \right \rangle$ fulfilling
\begin{align*}
D_{0^+}^{\beta_n-m}u(0)&=D_{0^+}^{\beta_n-m}f(0),\\
D_{0^+}^{\beta_n-m+1}u(0)+c_{n-1}\,D_{0^+}^{\beta_n-m}u(0)&=D_{0^+}^{\beta_n-m+1}f(0),\\
&\cdots \\
D_{0^+}^{\beta_n-1}u(0)+c_{n-1}\,D_{0^+}^{\beta_n-2}u(0)+\cdots+c_{n-m+1}&\,D_{0^+}^{\beta_n-m-1}u(0)=D_{0^+}^{\beta_n-1}f(0).
\end{align*}
\end{lemma}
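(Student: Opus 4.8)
The plan is to establish the bijective correspondence between strong solutions of \eqref{previodiff2} (indexed by the allowed choices of source term $f$ in the integral problem \eqref{cansado}) and vectors of initial values $\left(D_{0^+}^{\beta_n-m} u(0),\dots,D_{0^+}^{\beta_n-1} u(0)\right)$, and then to identify the linear change of coordinates relating the two. First I would recall from the previous section that, once we restrict the source term in \eqref{weaksol} to $f\in\left\langle\left\{t^{\beta_n-1},\dots,t^{\beta_n-m}\right\}\right\rangle$, the unique $L^1$ solution $u$ of \eqref{cansado} automatically lies in $\bigcap_{j=1}^n\mathcal{X}_{\beta_j}$ (that is precisely the content of the Lemma just proved), so it is a strong solution, and conversely any strong solution produces, via $\Upsilon+\mathrm{Id}$, a weak solution whose source term must lie in this $m$-dimensional space by Remark \ref{remarklenders}. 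Hence the map $f\mapsto u$ is a well-defined linear bijection between an $m$-dimensional space of source terms and the $m$-dimensional space of strong solutions of the homogeneous-plus-particular problem.

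The core computation is then to read off the initial values of $u$ from those of $f$. Writing $\Upsilon=c_1 I_{0^+}^{\beta_n-\beta_1}+\dots+c_{n-1}I_{0^+}^{\beta_n-\beta_{n-1}}$ and starting from $(\Upsilon+\mathrm{Id})u = I_{0^+}^{\beta_n}w + f$, I would apply the functionals $D_{0^+}^{\beta_n-m}(\cdot)(0),\dots,D_{0^+}^{\beta_n-1}(\cdot)(0)$ to both sides. The term $I_{0^+}^{\beta_n}w$ contributes nothing, by \eqref{Iniciales2} together with Proposition \ref{propo}. The key bookkeeping is the action of $\Upsilon$: because of the notational normalization $\beta_n-\beta_{n-j}=j$ for $j<m$ while $\beta_n-\beta_{n-m}\in(m-1,m)$, the only summands of $\Upsilon$ that can raise the ``order'' of $u$ by an integer amount small enough to still register at one of these functionals are the integer-order integrals $c_{n-1}I_{0^+}^{1},\dots,c_{n-m+1}I_{0^+}^{m-1}$; every genuinely fractional $I_{0^+}^{\beta_n-\beta_j}$ (the worst case being $\beta_j=\beta_*$) pushes $u$ into $I_{0^+}^{\beta_n}L^1[0,b]$ modulo lower-order polynomial terms, hence is annihilated by all of $D_{0^+}^{\beta_n-m}(\cdot)(0),\dots,D_{0^+}^{\beta_n-1}(\cdot)(0)$. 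Concretely, for each $k\in\{1,\dots,m\}$, the identity $D_{0^+}^{\beta_n-k}\big(I_{0^+}^{\ell}v\big)(0)=D_{0^+}^{\beta_n-k-\ell}v(0)$ whenever $0\le\ell<k$ (valid since $I_{0^+}^{\beta_n-k}v\in\ker D_{0^+}^{\beta_n}$ pieces behave like $t^{\beta_n-k}$, cf. \eqref{Iniciales}) yields, after applying $D_{0^+}^{\beta_n-k}(\cdot)(0)$,
\begin{equation*}
D_{0^+}^{\beta_n-k}u(0)+\sum_{\ell=1}^{k-1}c_{n-\ell}\,D_{0^+}^{\beta_n-k-\ell}u(0)=D_{0^+}^{\beta_n-k}f(0),
\end{equation*}
which is exactly the displayed triangular system.

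Finally I would observe that this linear system, as a map from $\left(D_{0^+}^{\beta_n-m}u(0),\dots,D_{0^+}^{\beta_n-1}u(0)\right)$ to $\left(D_{0^+}^{\beta_n-m}f(0),\dots,D_{0^+}^{\beta_n-1}f(0)\right)$, is lower triangular with $1$'s on the diagonal, hence invertible; and by \eqref{Iniciales} the quantities $D_{0^+}^{\beta_n-k}f(0)$ are in turn in bijection (again triangular, with nonzero $\Gamma$-factor diagonal) with the coefficients of $f$ in the basis $\left\{t^{\beta_n-1},\dots,t^{\beta_n-m}\right\}$. Composing, for any prescribed vector of initial values there is exactly one $f$, hence exactly one strong solution $u$ realizing it, which is the assertion. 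The main obstacle I anticipate is the careful justification that the fractional summands of $\Upsilon$ really are killed by every one of the $m$ initial-value functionals — i.e. tracking the orders $\beta_n-\beta_j$ exactly as in the preceding Lemma's proof, including the lower-order polynomial remainders generated when $\beta_n-\beta_j\in\mathbb{Z}^+$ with $\beta_n-\beta_j<\beta_n-\beta_*$, and checking these remainders do not disturb the triangular structure. Once that is in hand, the rest is linear algebra.
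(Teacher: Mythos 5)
Your strategy is the same as the paper's: use the preceding lemma together with Remark \ref{remarklenders} to identify strong solutions with source terms $f\in\left\langle\left\{t^{\beta_n-1},\dots,t^{\beta_n-m}\right\}\right\rangle$, apply the functionals $D_{0^+}^{\beta_n-k}(\cdot)(0)$ to \eqref{cansado}, observe that the $I_{0^+}^{\beta_n}w$ part and the genuinely fractional summands of $\Upsilon$ contribute nothing at $t=0$, and invert the resulting triangular system. That outline is sound and is exactly how the paper proceeds.

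However, your central displayed equation is mis-indexed and does not reproduce the system in the statement. The summand $c_{n-\ell}I_{0^+}^{\ell}u$ contributes $c_{n-\ell}\,D_{0^+}^{\beta_n-k-\ell}u(0)$, and this survives precisely when $u$ still has a $t^{\beta_n-(k+\ell)}$ component, i.e.\ when $k+\ell\le m$; the correct range is therefore $\ell=1,\dots,m-k$, not $\ell=1,\dots,k-1$. (The side condition ``$0\le\ell<k$'' you attach to the composition identity is not the relevant constraint either: $D_{0^+}^{\beta_n-k}I_{0^+}^{\ell}=D_{0^+}^{\beta_n-k-\ell}$ on the functions at hand regardless, and what decides whether a term survives is whether $D_{0^+}^{\beta_n-k-\ell}u(0)$ vanishes, which happens exactly when $k+\ell>m$.) As written, for $k=1$ your sum is empty and your equation reads $D_{0^+}^{\beta_n-1}u(0)=D_{0^+}^{\beta_n-1}f(0)$, whereas the last row of the lemma's system carries all of $c_{n-1},\dots,c_{n-m+1}$; the two agree only when $k-1=m-k$. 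With the upper limit corrected to $m-k$, your equation becomes the displayed system and the remainder of your argument — triangularity with unit diagonal, the nonzero $\Gamma$-factors relating $D_{0^+}^{\beta_n-k}f(0)$ to the coefficients of $f$, and the resulting bijection between initial-value vectors and admissible source terms — goes through exactly as in the paper.
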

\begin{proof}
Consider again the equation
\begin{equation}
\left(c_1 I_{0^+}^{\beta_n-\beta_1}+\dots+c_{n-1} I_{0^+}^{\beta_n-\beta_{n-1}}+ \textnormal{Id}\right) u(t) = I_{0^+}^{\beta_n} w(t) + f(t), \tag{\ref{cansado}}
\end{equation}

Recall that we look for strong solutions to (\ref{cansado}), that lie in the functional space $\left \langle \left\{t^{\beta_n-1} , \dots, t^{\beta_n-m} \right\} \right \rangle \oplus I_{0^+}^{\beta_n}L^1[0,b]$, so we write $$u(t)=d_1\,t^{\beta_n-m}+\cdots+d_{m}\,t^{\beta_n-1}+I_{0^+}^{\beta_n}\widetilde{u}(t).$$ Moreover, take into account that a strong choice for $f \in \ker D_{0^+}^{\beta_n}$ allows to describe $$f(t) = b_1\,t^{\beta_n-m}+\cdots+b_{m}\,t^{\beta_n-1}.$$ Now, we will derive the initial conditions after applying $D_{0^+}^{\beta_n-k}$, for every $k \in \{1,\dots,m\}$, and substituting $t=0$ in (\ref{cansado}).

At the right hand side this is easy, since $D_{0^+}^{\beta_n-k}I_{0^+}^{\beta_n} w(t) \in  I_{0^+}^1 L^1[0,b]$ and, thus, the substitution at $t=0$ gives zero. The function $D_{0^+}^{\beta_n-k}f(t)$ can be computed trivially, due to the expression of $f$, obtaining \[D_{0^+}^{\beta_n-k}f(0)=\Gamma(\beta_n - k +1) \,b_k.\]

At the left hand side, on the one hand, we have again a similar situation to the previous one, since $D_{0^+}^{\beta_n-k}I_{0^+}^{\beta_n-\beta_j}I_{0^+}^{\beta_n} \widetilde{u}(t) \in I_{0^+}^{k+(\beta_n-\beta_j)}L^1[0,b] \subset I_{0^+}^1 L^1[0,b]$ for any subindex $j\in \{1,\dots,n\}$ and, thus, the substitution at $t=0$ gives zero. On the other hand, $D_{0^+}^{\beta_n-k}I_{0^+}^{\beta_n-\beta_j}t^{\beta_n-l}$ has three possibilities:
\begin{itemize}
\item If $\beta_n-\beta_j>l-k$, then $D_{0^+}^{\beta_n-k}I_{0^+}^{\beta_n-\beta_j}t^{\beta_n-l}$ is a scalar multiple of a power of $t$ with positive exponent. Thus, when we make the substitution at $t=0$ we get $0$.
\item If $\beta_n-\beta_j=l-k$, then $D_{0^+}^{\beta_n-k}I_{0^+}^{\beta_n-\beta_j}t^{\beta_n-l}=\Gamma(\beta_n-l+1)$ is constant, and it is obviously defined for $t=0$.
\item If $\beta_n-\beta_j < l-k \leq m-1$, then $\beta_n-\beta_j$ is an integer and the computation $D_{0^+}^{\beta_n-k}I_{0^+}^{\beta_n-\beta_j}t^{\beta_n-l}$ gives the zero function.
\end{itemize}
The interest of the previous trichotomy is that we never obtain some $t^{-\gamma}$ with $\gamma>0$. In other case, we would have a huge trouble, since we could not evaluate the expression for $t=0$. Fortunately, we can always apply $D_{0^+}^{\beta_n-k}$ to Equation (\ref{cansado}), for every value $k \in \{1,\dots,m\}$, and substitute at $t=0$. We arrive to the following linear system of equations
\begin{align*}
D_{0^+}^{\beta_n-m}u(0)&=D_{0^+}^{\beta_n-m}f(0),\\
D_{0^+}^{\beta_n-m+1}u(0)+c_{n-1}\,D_{0^+}^{\beta_n-m}u(0)&=D_{0^+}^{\beta_n-m+1}f(0),\\
&\cdots \\
D_{0^+}^{\beta_n-1}u(0)+c_{n-1}\,D_{0^+}^{\beta_n-2}u(0)+\cdots+c_{n-m+1}&\,D_{0^+}^{\beta_n-m}u(0)=D_{0^+}^{\beta_n-1}f(0).
\end{align*}
Note that all the involved derivatives in the initial conditions have the same decimal part, since only coefficients $c_{n-1},\cdots,c_{n-m+1}$ appear in the system. We also highlight that the system has always a unique solution, since it is triangular and it has no zero element in the diagonal. Therefore, a choice for $f$ linked to a strong solution determines a vector of initial values $(D_{0^+}^{\beta_n-m}u(0),\dots,D_{0^+}^{\beta_n-1}u(0))$ and vice-versa in a bijective way.
\end{proof}

We shall give two examples summarizing how to apply all the previous results.

\begin{example}
Consider the following fractional differential equation (strong problem)
\begin{equation*}
\left(D_{0^+}^{\frac{7}{3}}+3\,D_{0^+}^{\frac{4}{3}}+4\,D_{0^+}^{\frac{1}{3}}\right)u(t)=t^3
\end{equation*}
and define $\beta_1=\frac{1}{3},\beta_2=\frac{4}{3},\beta_3=\frac{7}{3}$. In this case, note that $\beta_*=0$, since all the differences $\beta_3-\beta_j$ are integers. The strong solutions for the example will lie in $\bigcap_{j=1}^3 \mathcal{X}_{\beta_j}$. The dimension of the affine space of strong solutions will be $\lceil \beta_3\rceil=3$ and the initial conditions that ensure existence and uniqueness of solution will be $D_{0^+}^{\frac{4}{3}}u(0)=a_3$, $D_{0^+}^{\frac{1}{3}}u(0)=a_2$ and $I_{0^+}^{\frac{2}{3}}u(0)=a_1$.

Moreover, after left-factoring $D_{0^+}^{\frac{7}{3}}$, we find that the associated family of weak problems is
\begin{equation*}
\left(4\,I_{0^+}^{2}+3\,I_{0^+}^1+\textnormal{Id}\right)u(t)=I_{0^+}^{\frac{7}{3}} t^3 + f(t)
\end{equation*}
where $f(t) \in \left \langle \left\{t^{\frac{4}{3}}, t^{\frac{1}{3}}, t^{-\frac{2}{3}} \right\} \right \rangle$, which lives in a three dimensional space. The, a priori weak, obtained solution is always strong since we have that $\lceil \beta_3 - \beta_*\rceil =\lceil \beta_3 \rceil$.

Finally, the relation between a choice for $f(t)=b_3\,t^{\frac{4}{3}}+b_2\,t^{\frac{1}{3}}+b_1\,t^{-\frac{2}{3}}$ providing a strong solution and the initial conditions $a_1$, $a_2$ and $a_3$ is
\begin{align*}
a_1=I_{0^+}^{\frac{2}{3}}f(0)=b_1 \cdot \Gamma\left(1-\frac{2}{3}\right), \\
a_2+3\,a_1=D_{0^+}^{\frac{1}{3}}f(0)=b_2 \cdot \Gamma\left(1+\frac{1}{3}\right),\\
a_3+3\,a_2+4\,a_3=D_{0^+}^{\frac{4}{3}}f(0)=b_3 \cdot \Gamma\left(1+\frac{4}{3}\right).
\end{align*}
\end{example}

\begin{example}
Consider the following fractional differential equation (strong problem)
\begin{equation*}
\left(D_{0^+}^{\frac{13}{4}}+3\,D_{0^+}^{\frac{9}{4}}+D_{0^+}^{2}+D_{0^+}^{\frac{5}{4}}+D_{0^+}^{1}\right)u(t)=t
\end{equation*}
and define $\beta_1=1,\beta_2=\frac{5}{4},\beta_3=2,\beta_4=\frac{9}{4},\beta_5=\frac{13}{4}$. In this case, note that $\beta_*=\beta_3$, since it fulfils the property that $\beta_5-\beta_*$ is the least possible non-integer difference $\beta_5-\beta_j$. The strong solutions for the example will lie in $\bigcap_{j=1}^5 \mathcal{X}_{\beta_j}$. The dimension of the affine space of strong solutions will be $\lceil \beta_5-\beta_* \rceil=2$ and the initial conditions that ensure existence and uniqueness of solution will be $D_{0^+}^{\frac{9}{4}}u(0)=a_2$ and $D_{0^+}^{\frac{5}{4}}u(0)=a_1$.

Moreover, after left-factoring $D_{0^+}^{\frac{13}{4}}$, we find that the associated family of weak problems is
\begin{equation*}
\left(I_{0^+}^{\frac{9}{4}}+I_{0^+}^{2}+I_{0^+}^{\frac{5}{4}}+3\,I_{0^+}^{1}+\textnormal{Id}\right)u(t)=I_{0^+}^{\frac{13}{4}} t + f(t)
\end{equation*}
where $f(t) \in \left \langle \left\{ t^{\frac{9}{4}}, t^{\frac{5}{4}}, t^{\frac{1}{4}}, t^{-\frac{3}{4}} \right\} \right \rangle$, which lives in a four dimensional space. The, a priori weak, obtained solution will be strong if $f(t) \in \left \langle \left\{ t^{\frac{9}{4}}, t^{\frac{5}{4}} \right\} \right \rangle$.

Finally, the relation between a choice for $f(t)=b_2\,t^{\frac{9}{4}}+b_1\,t^{\frac{5}{4}}$ providing a strong solution and the initial conditions $a_1$ and $a_2$ is
\begin{align*}
a_1=D_{0^+}^{\frac{5}{4}}u(0)=D_{0^+}^{\frac{5}{4}}f(0)=b_1 \cdot \Gamma\left(1+\frac{5}{4}\right) \\
a_2+3\,a_1=D_{0^+}^{\frac{9}{4}}u(0)+3\,D_{0^+}^{\frac{5}{4}}u(0)=D_{0^+}^{\frac{9}{4}}f(0)=b_2 \cdot \Gamma\left(1+\frac{9}{4}\right)
\end{align*}
\end{example}

\section{Conclusions}\label{s:conc}

We summarize the conclusions obtained in this paper.

\begin{itemize}
\item We have recalled the main results involving existence and uniqueness of solution for linear fractional integral equations with constant coefficients.
\item We have seen that from each linear fractional differential equation with constant coefficients of order $\beta_n$ is possible to derive a $\lceil \beta_n \rceil$ dimensional family of associated fractional integral equations, in a natural way. Moreover, each solution to the fractional differential equation fulfils exactly one of these fractional integral equations.
\item We have shown that there exists a $\lceil \beta_n-\beta_*\rceil$ dimensional subfamily (of the $\lceil \beta_n \rceil$ dimensional family of associated fractional integral equations) such that each solution to a problem of the subfamily gives a solution to the original linear fractional differential equation of order $\beta_n$.  This value $\beta_*$ is obtained as the greatest involved order in the fractional differential equation such that $\beta_n-\beta_*$ is not an integer. If such a value does not exist, the same result holds after defining $\beta_*=0$.
\item We have seen how initial values at $t=0$ for the derivatives of orders $\beta_n-\lceil \beta_n-\beta_* \rceil,\dots,\beta_n-1$ guarantee existence and uniqueness of solution to a linear fractional differential equation with constant coefficients of order $\beta_n$. We have described the correspondence between such initial values for the fractional differential equation and the selection of a source term in the $\lceil \beta_n-\beta_*\rceil$ dimensional subfamily, in such a way that both problems have the same unique solution. If $\beta_n-\lceil \beta_n-\beta_* \rceil \in (-1,0)$ this first initial value is imposed, indeed, for the fractional integral of order $\lceil \beta_n-\beta_* \rceil-\beta_n$.
\item We expect that this idea can be extended to different type of fractional differential problems. It would be nice to amplify the scope of this work to a more general case than the one of constant coefficients. Moreover, the same idea could be applied to obtain similar results for other derivatives like Caputo and compare them with the already existing theory in literature and solution methods \cite{AsCaTu}. Furthermore, the same philosophy could be applied to other type of problems as, for instance, periodic ones that have relevant applications \cite{St}.
\end{itemize}

\section*{Acknowledgements}
The research of D. Cao Labora was partially supported by a PhD scholarship from Ministerio de Educaci\'on, Cultura y Deporte (FPU) with reference number FPU16/04168.

\bibliographystyle{amsplain}

\end{document}